\definecolor{lightgray}{gray}{0.5}
\definecolor{darkred}{rgb}{.6,0,0}
\definecolor{darkblue}{rgb}{0,0,0.5}
\newcommand{\qed}{\hfill \ensuremath{\Box}}
\definecolor{dkgreen}{rgb}{0,0.4,0}
\definecolor{gray}{rgb}{0.5,0.5,0.5}
\definecolor{dkred}{rgb}{0.4,0.0,0.0}
\definecolor{dkblue}{rgb}{0,0,0.65}
\tiny\color{gray},
\newtheorem{remark}{Remark}[section]
\newtheorem{example}{Example}[section]
\newcommand{\dx}{\Delta x}
\newcommand{\dt}{\Delta t}
\newcommand{\scinot}[2]{\ensuremath{#1\!\times\!10^{#2}}}
\newcommand{\s}[1]{Y^{(#1)}}
\newcommand{\m}[1]{#1}
\newcommand{\iph}{{i + \frac{1}{2}}}
\newcommand{\imh}{{i - \frac{1}{2}}}
\newcommand{\ipmh}{{i \pm \frac{1}{2}}}
\newcommand{\hf}{\frac{1}{2}}
\newcommand{\Dt}{\Delta t}
\newcommand{\Dx}{\Delta x}
\newcommand{\Iint}{{\mathcal I}}
\newcommand{\sspcoeff}{{\mathcal C}}
\newcommand{\Oop}{{\mathcal O}}
\newcommand{\tfinal}{t_\textup{f}}
\newcommand{\R}{\mathbb{R}}
\title{Spatially partitioned embedded Runge--Kutta methods}
\author{%
  David I.~Ketcheson\thanks{4700 King Abdullah University of Science \&
    Technology, Thuwal 23955, Saudi Arabia
    (\url{david.ketcheson@kaust.edu.sa}). The work of this author was
    supported by funding from King Abdullah University of Science and
    Technology (KAUST).}
  \and
  Colin B.~Macdonald\thanks{Mathematical Institute, University of
    Oxford, OX1\,3LB, UK (\url{macdonald@maths.ox.ac.uk}).  The work
    of this author was supported by NSERC Canada and
    Award No KUK-C1-013-04 made by King Abdullah
    University of Science and Technology (KAUST)}
  \and
  Steven J.~Ruuth\thanks{Department of Mathematics, Simon Fraser
    University, Burnaby, British Columbia, V5A\,1S6 Canada
    (\url{sruuth@sfu.ca}).  The work of this author was partially
    supported by a grant from NSERC Canada and by award KUK-C1-013-04 made by King Abdullah University of Science and Technology (KAUST).}%
}
\begin{document}

\maketitle

\begin{abstract}
We study spatially partitioned embedded Runge--Kutta (SPERK) schemes for partial differential equations (PDEs), 
in which each of the component schemes is applied over a different part of the spatial domain.
Such methods may be convenient for problems in which the smoothness of the solution or the 
magnitudes of the PDE coefficients vary strongly in space.  We focus on embedded
partitioned methods as they offer greater efficiency and avoid the order reduction
that may occur in non-embedded schemes.  We demonstrate that the lack of conservation
in partitioned schemes can lead to non-physical effects and propose conservative additive
schemes based on partitioning the fluxes rather than the ordinary differential equations.
A variety of SPERK schemes are presented, including an embedded pair suitable for the
time evolution of fifth-order weighted non-oscillatory (WENO) spatial discretizations.
Numerical experiments are provided to support the theory.    
\end{abstract}

\begin{keywords} 
Embedded Runge--Kutta methods, 
spatially partitioned methods, 
conservation laws, 
method of lines.
\end{keywords}

\begin{AMS}
35L65, 65L06, 65M20
\end{AMS}

\pagestyle{myheadings}
\thispagestyle{plain}
\markboth{Ketcheson, Macdonald and Ruuth}{Spatially partitioned embedded Runge--Kutta methods}

\section{Introduction}
The method-of-lines is a popular discretization technique for the numerical
solution of time-dependent partial differential equations.  In it, a spatial discretization is
applied to the PDE, yielding an initial value problem consisting of a
large system of ordinary differential equations (ODEs).
These are evolved by some time-stepping method, for example,
a Runge--Kutta method.  

The choice of a suitable time-stepping scheme may depend on a variety of considerations.
Typically, schemes are chosen to give good linear stability and accuracy, although in some applications
it is also useful or necessary to preserve the monotonicity or other properties of the true PDE solution.
Often, the key properties for determining a suitable time-stepping scheme vary locally in space.
Examples of such properties include the grid spacing, the coefficients of the PDE, 
the smoothness of the solution and the geometry of the domain.
As a consequence, it is possible that a scheme that is effective in one portion of the domain is
unsuitable or inefficient in another.  It is therefore natural to consider the development and analysis of
spatially partitioned time-stepping methods, in which different step sizes or different
methods are used over subdomains.

A class of methods that often benefit from spatially partitioned time-stepping
are PDE discretizations with grid adaptivity.  In regions where the solution is nonsmooth or exhibits rapid variation,
fine grids are needed; other regions may be more efficiently discretized with coarser meshes.
If all components of an ODE system are evolved using some 
explicit  time-stepping scheme and a single time step-size $\Dt$,  
the evolution of all components will be restricted by the stiffest components of the system.
Improved efficiency is often possible by considering 
time-stepping methods that vary their time step-size
according to the local mesh spacing.  Time-stepping 
schemes of this type are called {\it multirate} schemes.
The first multirate schemes for one-dimensional conservation laws were developed by Osher and Sanders in \cite{OsherSanders}.
Their approach carries out forward Euler time-stepping with step-sizes that vary locally.   
More recently higher-order methods have been considered;
for example, Tang and Warnecke \cite{tang2006high} develop second-order multirate schemes based on
Runge--Kutta or Lax-Wendroff type schemes.  
Other recent work includes that by Constantinescu and Sandu \cite{constantinescu2007multirate} where 
a simple construction algorithm is given 
to form a second-order accurate multirate scheme based on an arbitrary strong-stability-preserving (SSP) 
scheme of order two or higher.
Notably, their approach preserves a variety of monotonicity properties, such as positivity and
the maximum principle.   As explained by Hundsdorfer, Mozartova and Savcenco \cite{hundsdorfer2012monotonicity} multirate schemes 
for conservation laws are either locally inconsistent (e.g., \cite{OsherSanders}) or lack mass conservation (e.g., \cite{tang2006high}).  
Fortunately, order reduction due to such local inconsistency may be less severe
than expected, due to cancellation and damping effects; see 
\cite{hundsdorfer2012monotonicity} for details.   

Whereas multirate methods use different step sizes, in the present work we focus on
using different methods on different spatial subdomains.
Specifically, we investigate
spatially partitioned embedded Runge--Kutta (SPERK) 
schemes applied to one-dimensional PDEs such as conservation laws
\begin{align}
u_t + f(u)_x & = 0.
\end{align}
Here the term {\em embedded} refers to methods having the same coefficient
matrix $A$ (but different weights $b$), which
avoids the unnecessary duplication of computations that can occur when combining 
two unrelated time-stepping schemes.    Two  classes
of SPERK schemes are considered: {\em equation-based partitioning} and {\em flux-based partitioning}.
We shall find that equation-based partitioning maintains the overall accuracy of the schemes 
composing the embedded pair.  This approach is, however, not conservative and can lead to
wrong shock propagation speeds when applied to hyperbolic PDEs.   Flux-based partitioning
is conservative, but experiences a theoretical order reduction in the local consistency.  
In practice, however, we shall find that the overall accuracy of the combined scheme is unaffected.

The idea of adapting the time discretization locally in space is not new.  
Efficient combinations of implicit and explicit time discretizations have been
developed for problems where very large wave speeds appear in a localized regions, such as
oil reservoir modeling or fluid-structure interaction \cite{belytschko1979mixed,de2009error,timofeev2012application}.  Combinations of
higher- and lower-order implicit schemes have been used in order to preserve monotonicity
under large step sizes \cite{forth2001,duraisamy2003,duraisamy2007,Slingerland}.
Combinations of explicit methods have received less attention;
some techniques for simultaneous local adaptation of both temporal and spatial
discretizations were proposed
in \cite{harabetian1993nonconservative}, in the context of shock-capturing methods for
conservation laws.  Here we provide a general framework for local adaptation of just
the time discretization, for explicit integration of general time-dependent PDEs.

We will see that the methods we study can be viewed in the
general framework of partitioned or additive Runge--Kutta methods.
However, such methods are usually applied in a way that applies
different numerical treatment to {\em different physical processes},
whereas the emphasis here is on different numerical treatments for
{\em different spatial domains}.  These approaches are based on 
different motivations, and some approaches that work well for
the former fail for the latter (see Example \ref{godunov-example}).

To illustrate the use of spatially adaptive time-stepping schemes, 
consider a convection-diffusion problem in which the Reynolds number varies
spatially, such that the system is dominated by convection on one subdomain and dominated by diffusion elsewhere.   
The third-order, three-stage SSP Runge--Kutta scheme of Shu and Osher (SSPRK(3,3)) \cite{Shu/Osher:1988} 
might be desired for the convection-dominated regions, 
while a second-order Runge--Kutta--Chebyshev method, e.g., \cite{verwer1996},
might be preferred in diffusion-dominated regions.   
Unfortunately, neither scheme is particularly attractive on its own since SSPRK(3,3) requires 
small time steps when applied to diffusive problems, while the
Runge--Kutta--Chebyshev method is unstable when applied to convective problems.
On the other hand, a combination that applies each time-stepping scheme
where it is best suited may provide better linear stability than either scheme on its own.
An example of this kind is explored in Section \ref{sec:advectiondiffusion}.

There are other situations where spatially partitioned
time-stepping schemes show a strong potential.   For example, consider the evolution
of a conservation law involving both shocks and smooth regions.  The preservation of 
monotonicity properties may be the most crucial property  near the shock, 
whereas high-order accuracy may be of primary interest in smooth regions. 
Use of a spatially partitioned time-stepping scheme opens the possibility of
simultaneously obtaining good accuracy and monotonicity in such problems.
An example of this kind is explored in Section~\ref{sec:WENO},
where SPERK schemes are applied to fifth-order weighted essentially
non-oscillatory (WENO) spatial discretizations \cite{Shu:WENO_SiamReview}.
The motivation here is that
fifth-order SSPRK methods are complicated by their use
of the downwind-biased operator \cite{Shu/Osher:1988, Shu:1988,
Spiteri/Ruuth:newclass, Spiteri/Ruuth:downwindbias} while 
monotonicity and the corresponding SSP property is 
likely only useful in the vicinity of non-smooth
features.  In these regions WENO discretizations are formally
third-order accurate \cite{Shu:WENO_SiamReview}.
Thus, in our approach, a fifth-order linearly
stable Runge--Kutta scheme is used in smooth regions while an embedded
third-order SSP Runge--Kutta scheme is
used near shocks or other discontinuities.

The paper unfolds as follows.   Section~\ref{sec:SPERK} introduces equation-based partitioning and examines its conservation properties.
This is followed by the introduction and analysis of flux-based partitioning.  
Errors and positivity properties for both classes of schemes are considered in this section.
A variety of generalizations are given in Section~\ref{sec:generalizations}.   Section~\ref{sec:advectiondiffusion} 
gives some examples of SPERK schemes, and considers applications of the methods to a spatially discretized advection-diffusion equation.
Section~\ref{sec:WENO} considers SPERK schemes in the context of WENO spatial discretizations and a nonlinear
partitioning step.  In our approach, the WENO weights are used to select between
an SSP Runge--Kutta scheme, and a high-order scheme chosen for its linear stability properties.
Finally, Section~\ref{sec:discussion} concludes with a discussion of some other application areas and some of our current research
directions.

\section{Spatially partitioned embedded Runge--Kutta methods} \label{sec:SPERK}
We begin by introducing equation- and flux-based SPERK schemes.  An analysis
of the accuracy, conservation and positivity properties
of such schemes is also provided in this section.

\subsection{Equation-based partitioning}

Consider a system of $N$ ordinary differential equations
\begin{align} \label{ode}
U'(t) = G(U),
\end{align}
typically arising as the spatial discretization of a PDE where each
component in the solution approximates, for example, point values of
the PDE solution $u_i(t) \approx u(x_i,t)$ at discrete points $x_i, 1\le i \le N$.
To apply an $s$-stage Runge--Kutta method, we first compute the
stage values
\begin{subequations} \label{EPRK}
\begin{align} \label{RK-stages}
\s{j}     &= U^n + \Dt \sum_{k=1}^{s} a_{jk} G(\s{k}), ~~j=1,\ldots,s.
\end{align}
A standard Runge--Kutta method would then advance by one step using the formula
$U^{n+1} = U^n + \Dt \sum_{j=1}^s b_{j}  G(\s{j})$.
Instead, let a different set of weights be applied at each point
$x_i$, $1\le i \le N$, choosing between coefficients $b$ or $\hat{b}$.
This results in
\begin{align} \label{EPRK-b}
u_i^{n+1} &= u_i^n + \Dt \left[ \chi_i^n \sum_{j=1}^{s} b_{j}  g_i(\s{j})
+ (1-\chi_i^n) \sum_{j=1}^{s} \hat{b}_{j}  g_i(\s{j}) \right]
\end{align}
where $g_i$ is the $i$th component of $G$ and 
\begin{align}
  \chi^n_i = 
  \begin{cases}
    1,  &\text{if weights $b$ are used to compute $u^{n+1}_i$,}\\
    0,  &\text{if weights $\hat{b}$ are used to compute $u^{n+1}_i$.}
  \end{cases}
\end{align}
\end{subequations}
We write the coefficients of this method $A$, $b$, $\hat{b}$ using 
the same tableau notation that is
employed for embedded Runge--Kutta methods \cite{Hairer:ODEs1}:
\begin{align}  \label{eq:embedRK}
  \begin{array}{c|c}
    c & A \\
    \hline
    & \rule{0pt}{1.05em}b^T \\
    & \hat{b}^T
  \end{array}.
\end{align}
We refer to the embedded method \eqref{EPRK} as a \emph{spatially partitioned}
time-stepping method because the ``mask'' $\chi$ selects
which scheme to propagate at each point in space, at each time step.
In some cases, $\chi$ depends on the solution
values, although we do not explicitly represent this
for notational clarity.

\subsubsection{Connection to partitioned Runge--Kutta methods}

Methods of the form \eqref{EPRK} form a special subclass of
partitioned Runge--Kutta methods \cite{Hairer:ODEs1}.
Generally, methods in a partitioned RK pair may have different
coefficient matrices $A$ as well as different weights $b$.
In our embedded approach, methods have fewer
degrees of freedom available for their design, but they possess the
advantage of automatically satisfying the ``extra'' order conditions
for partitioned RK methods.
That is, if each of the component methods $(A,b)$ and $(A,\hat{b})$ is
accurate to order $p$, the SPERK method is also accurate to order $p$
in time
(see also Proposition~\ref{orderprop} below).

Method~\eqref{EPRK} partitions \eqref{ode} by equation; we refer to this type of partitioning as 
{\it equation-based partitioning}.
Because the $i$th equation corresponds to grid node $x_i$, $1\le i \le N$, our approach also gives
a spatial partitioning of a semi-discretized PDE.  
It is worth noting, however, that  equation-based partitioning is a very general technique that
does not require any correspondence to  grid locations.

\subsubsection{Conservation}
Many important physical phenomena are modeled by conservation laws, which in
one dimension have the form
\begin{align} \label{conslaw}
u_t + f(u)_x & = 0.
\end{align}
In the numerical solution of \eqref{conslaw}, one should use a conservative
scheme in order to ensure that shocks propagate at the correct speed.
Typically, \eqref{conslaw} is semi-discretized using a flux-differencing method:
\begin{align}  \label{eq:fluxdiff}
  u_i'(t) = - \frac{1}{\Dx} \left(f_\iph-f_\imh\right) 
\end{align}
where $f_\ipmh$ are numerical approximations to the flux at $x_\ipmh, 1\le i \le N$.
Integrating with a Runge--Kutta method gives
\begin{align*}
  u_i^{n+1} = u_i^n - \frac{\Dt}{\Dx} \left(\sum_{j=1}^s b_j f_\iph(\s{j})- \sum_{j=1}^s b_j f_\imh(\s{j})\right)
\end{align*}
where the stage values $\s{j}, 1\le j \le s,$ are defined in equation \eqref{RK-stages} above.
This method is conservative since corresponding fluxes cancel out (except at the boundaries) 
if we sum over the spatial index $i$, $1\le i \le N$.

Applying method \eqref{EPRK} gives instead
\begin{align*}
u_i^{n+1} = u_i^n - \frac{\Dt}{\Dx} &\left( \sum_{j=1}^s (\chi^n_i b_j + (1-\chi^n_i)\hat{b}_j) f_\iph(\s{j})\right. \\
                                &- \left. \sum_{j=1}^s (\chi^n_i b_j + (1-\chi^n_i)\hat{b}_j) f_\imh(\s{j})\right).
\end{align*}
This method is not conservative, since the flux terms at $x_\imh$ will not cancel
if $\chi^n_{i-1}\ne \chi^n_i$.  This can lead to solutions
in which discontinuities move at incorrect speeds.  Here we give an
example.

\begin{example}
We consider Burgers' equation, $u_t + \left(\frac{1}{2}u^2\right)_x=0$,
with the step function initial condition
\begin{align*}
u(x,t=0) & = \begin{cases}
2 & x\le 0, \\
0 & x>0.
\end{cases}
\end{align*}
We discretize in space with a finite difference flux-differencing scheme
using fifth-order WENO interpolation.  In time, we discretize using the SPERK scheme in
Table~\ref{tab:rk75ssp53_scheme} and equation-based partitioning.  We use a time step-size
of $\Dt = 0.6 \Dx$, corresponding to a CFL number of 1.2.
We take
\begin{align*}
\chi(x, t) & = \begin{cases}
0 & 0.01 < u(x,t) < 1.99, \\
1 & \text{otherwise.}
\end{cases}
\end{align*}
This simple choice of $\chi$ ensures that the SSP method is used
near the shock while the non-SSP method is used elsewhere.  It also (unfortunately)
ensures that the jumps in $\chi$ are near the shock, maximizing the effect of conservation errors.
The true shock velocity is $1$, but the numerical shock velocity converges
to approximately $0.925$.
If we replace $\chi$ above by $1-\chi$, the shock moves instead too rapidly. \qed
\end{example}

\subsection{Flux-based partitioning}

In \eqref{EPRK} we applied one of the RK schemes to each equation in the ODE system.
In order to obtain a conservative method,
we partition by the fluxes $f_{\iph}, 0\le i \le N$, rather than by the
equations.

Suppose we are given two Runge--Kutta methods
with identical coefficient matrices $A$, but different weights $b$ and $\hat{b}$.
Similar to equation-based partitioning, we first compute stage values 
according to \eqref{RK-stages},
which when applied to the flux-differencing semi-discretization \eqref{eq:fluxdiff} is
\begin{subequations} \label{flux-based}
\begin{align} \label{flux-based-RKstages}
y_i^{(j)}  &= u_i^n - \frac{\Dt}{\Dx} \sum_{k=1}^{s} a_{jk} \left(f_\iph(\s{j}) - f_\imh(\s{j})\right), ~~j=1,\ldots,s.
\end{align}
However, instead of varying the time-stepping method by equation, we
vary the method by flux.  An application of this partitioning to
\eqref{eq:fluxdiff} yields
\begin{equation}
\begin{split}
    u_i^{n+1} = u_i^n - \frac{\Dt}{\Dx}
        &\left(\chi^n_\iph \sum_{j=1}^s b_j f_\iph(\s{j}) + (1-\chi^n_\iph) \sum_{j=1}^s \hat{b}_j f_\iph(\s{j})\right. \\
            &- \left. \chi^n_\imh \sum_{j=1}^s b_j f_\imh(\s{j}) - (1-\chi^n_\imh) \sum_{j=1}^s \hat{b}_j f_\imh(\s{j})\right),
\end{split}
\end{equation}
where we have partitioned using the characteristic functions
corresponding to \emph{cell edges} $x_\iph$ (rather than grid points $x_i$)
\begin{align}
  \chi^n_\iph =
  \begin{cases}
    1,  &\text{if weights $b$ are used for fluxes at $x_\iph$,}\\
    0,  &\text{if weights $\hat{b}$ are used fluxes at $x_\iph$.}
  \end{cases}
\end{align}
\end{subequations}
Fig.~\ref{fig:pseudocode2} compares a pseudo-code implementation of
the equation-based \eqref{EPRK} and flux-based partitioning
\eqref{flux-based} for a three-stage SPERK scheme.

As we show next, the flux-based method corresponds to an additive
Runge--Kutta method instead of a partitioned Runge--Kutta method.

\subsubsection{Connection to additive Runge--Kutta schemes}
Consider again the flux-differencing formula \eqref{eq:fluxdiff}.
Let $\Phi(U)$ denote the vector of fluxes, with components $\phi_i(U) = f_{i+\hf}(U), 0 \le i \le N$.
We can write the flux-differencing method \eqref{eq:fluxdiff} in vector form:
\begin{align} \label{fluxdiff-phi-new}
  U'(t) = - \frac{1}{\Dx} \m{D} \Phi,
\end{align}
where $\m{D}$ is the $N\times (N+1)$  differencing matrix with $1$ on the
superdiagonal and $-1$ on the main diagonal.

We split the flux vector based on the vector $\chi \equiv [\chi_\frac{1}{2}, \chi_\frac{3}{2}, \ldots, \chi_{N+\frac{1}{2}}]^T$:
\begin{align}
  U'(t) = - \frac{1}{\Dx} \m{D} \Phi(U)
  &= - \frac{1}{\Dx} \m{D} \big[ \diag(\chi) \Phi(U) + (I-\diag(\chi)) \Phi(U)\big], \nonumber\\
  &= \underbrace{-\frac{1}{\Dx} \m{D} \diag(\chi) \Phi(U)}_{G_\chi(U)}
  \underbrace{-\frac{1}{\Dx} \m{D} (I-\diag(\chi)) \Phi(U)}_{G_{1-\chi}(U)}, \nonumber\\
  &= G_\chi(U) + G_{1-\chi}(U), \label{eq:splitd-new}
\end{align}
where $G_\chi(U) = {-\frac{1}{\Dx} \m{D} \diag(\chi) \Phi(U)}$,
$G_{1-\chi}(U) = {-\frac{1}{\Dx} \m{D} (I-\diag(\chi)) \Phi(U)}$
and
$\diag(\chi)$ is the $(N+1)\times (N+1)$ diagonal matrix that has $\chi$ as its main diagonal.
Flux-based partitioning applies different Runge--Kutta methods to $G_\chi$ and $G_{1-\chi}$.
Because we have chosen embedded pairs with identical
coefficient matrices $A$, the stage values $\s{j}, 1\le j \le s,$ are computed according
to \eqref{RK-stages} in the standard fashion.  Different weights for
$G_\chi$ and $G_{1-\chi}$ are applied, however, according to the formula
\begin{equation}  \label{EARK-new}
  U^{n+1} = U^{n} + \Dt \left[
    \sum_{j=1}^s b_j G_\chi(\s{j})  +
    \sum_{j=1}^s \hat{b}_j G_{1-\chi}(\s{j})
  \right].
\end{equation}

This approach  (i.e., using \eqref{EARK-new}) may be viewed as
applying an additive Runge--Kutta method \cite{Kennedy/Carpenter:2003}
to \eqref{eq:splitd-new}.
We emphasize that the methods we consider have identical $A$ matrices and therefore
form a special {\em embedded} subclass of the additive RK methods.
The effect of the splitting introduced in \eqref{eq:splitd-new} is crucial for
understanding the order of accuracy of  flux-based
SPERK schemes.
Theoretically, we might expect a reduction of order by one, a
result we establish in Theorem~\ref{thm:mixtimeOrder:flux-based}.
In practice, however, we observe that 
the order of accuracy of SPERK schemes is the minimum order of the two schemes for
both equation- and flux-based partitioning.
Section~\ref{sec:WENO} gives some numerical experiments illustrating
this property.

\begin{figure}
\newcommand{\ys}[2]{\texttt{y}^{(#1)}_{#2}}
\newcommand{\YS}[2]{\texttt{Y}^{(#1)}_{#2}}
\begin{lstlisting}[label=equation-based2,mathescape]
# Program 1: Equation-based SPERK implementation
# First compute the standard stage values
$\YS{1}{}$ = U
for $i$ in range(1, N):
   $\ys{2}{i}$ = u$_i$ - dt/dx$\big(a_{21}(\ys{1}{i}$-$\ys{1}{i-1})\big)$
for $i$ in range(1, N):
   $\ys{3}{i}$ = u$_i$ - dt/dx$\big(a_{31}(\ys{1}{i}$-$\ys{1}{i-1})$  +  $a_{32}(\ys{2}{i}$-$\ys{2}{i-1})\big)$
# Advance in time from u to unew using:
for $i$ in range(1, N):
   unew$_i$ = u$_i$ - dt/dx$\Big($ $\chi_i\!$ $\big(b_1(\ys{1}{i}$-$\ys{1}{i-1})$ + $b_2(\ys{2}{i}$-$\ys{2}{i-1})$ + $b_3(\ys{3}{i}$-$\ys{3}{i-1})\big)$
                 + (1-$\chi_i$)$\big(\hat{b}_1(\ys{1}{i}$-$\ys{1}{i-1})$ + $\hat{b}_2(\ys{2}{i}$-$\ys{2}{i-1})$ + $\hat{b}_3(\ys{3}{i}$-$\ys{3}{i-1})\big)\Big)$
\end{lstlisting}
\begin{lstlisting}[label=flux-based2,mathescape]
# Program 2: Flux-based SPERK implementation
# First compute the standard stage values as in lines 3-7 above
...
# Then advance in time from u to unew using:
for $i$ in range(1, N):
   unew$_i$ = u$_i$ - dt/dx$\Big($
         $\,\,\chi_\iph\,\,\,$$\big(b_1\ys{1}{i}$ + $b_2\ys{2}{i}$ + $b_3\ys{3}{i}$$\big)$  $\phantom{..}$    -      $\:\:\:\:\, \chi_\imh \,\,$$\big(b_1\ys{1}{i-1}$ + $b_2\ys{2}{i-1}$ + $b_3\ys{3}{i-1}$$\big)$
    + $\big(1-\chi_\iph\big)$$\big(\hat{b}_1\ys{1}{i}$ + $\hat{b}_2\ys{2}{i}$ + $\hat{b}_3\ys{3}{i}$$\big)$ - $\big(1-\chi_\imh\big)$$\big(\hat{b}_1\ys{1}{i-1}$ + $\hat{b}_2\ys{2}{i-1}$ + $\hat{b}_3\ys{3}{i-1}\big)\Big)$
\end{lstlisting}
\caption{Pseudocode implementation of the equation-based and flux-based
  SPERK algorithms.
  Here we apply one step of an explicit three-stage method to the first-order upwinding
  spatial discretization of the advection PDE $u_t + u_x = 0$.
  }
\label{fig:pseudocode2}
\end{figure}

\subsection{Accuracy} \label{sec:accuracy}
In general, additive and partitioned Runge--Kutta methods may exhibit lower
order convergence rates than either of their component methods.  In order for the method
to be fully accurate, additional order conditions relating the coefficients of the
different component methods must be satisfied.  Even then, in our context of spatial
splitting, such methods can exhibit order reduction when the coefficient matrices $A$ differ
\cite{hundsdorfer2012monotonicity}.

\begin{example} \label{godunov-example}
Flux-based partitioning based on a characteristic function $\chi$ gives the ODE system~(\ref{eq:splitd-new}).
The classical Godunov operator splitting is a first-order method that may be
applied to this system.  Applying this splitting yields
\begin{align*}
U^* & = U^n + \Dt G_\chi(U^n), \\
U^{n+1} & = U^* + \Dt G_{1-\chi}(U^*).
\end{align*}
This can be written as an additive Runge--Kutta method with coefficient arrays
\begin{align*}
  \begin{array}{c|cc}
    0 & 0 & 0 \\
    1 & 1 & 0 \\ \hline
      & 1 & 0 \\
  \end{array}, & &
  \begin{array}{c|cc}
    0 & 0 & 0 \\
    0 & 0 & 0 \\ \hline
      & 0 & 1 \\
  \end{array}.
\end{align*}
Unlike all other pairs considered in this paper, this pair uses different 
coefficients $a_{ij}$.

Consider the advection equation $u_t + u_x = 0$ discretized in space
by upwind differencing:
\begin{align} \label{updiff}
u_i'(t) = g_i(U)
\end{align}
where $g_i(U) = -\frac{1}{\Dx} \left(u_{i} - u_{i-1}\right)$.  Let $G=[g_i(U)]$ and
take $\Dt=\Dx$ and constant initial data, $u^0_i=1, -\infty < i < \infty$.  We partition the
domain into two regions by choosing an integer $J$ with
$\chi_\imh = 1$ for $i \le J$ and $\chi_\imh = 0$ for $i > J$.
After one step with this method, the computed solution
is
\begin{align*}
u^1_i &=
\begin{cases}
    1 & \text{ for } i < J, \\
    0 & \text{ for } i = J, \\
    2 & \text{ for } i = J+1, \\
    1 & \text{ for } i>J+1.
\end{cases}
\end{align*}
Despite the constant initial data, the local error is $\Oop(1)$ in the maximum norm.
\qed
\end{example}

The difficulty encountered in the example above is typical of methods
in which $A\ne \hat{A}$.  In the next two theorems, we establish the
accuracy of equation- and flux-based SPERK methods.

\begin{theorem}\label{thm:mixtimeOrder}
Suppose that the partitioned Runge--Kutta method \eqref{EPRK} with coefficients $(A,b,\hat{b})$
is applied to the semi-discretization \eqref{ode}.
Then the fully discretized system has a local order of accuracy
equal to $\min(p,\hat{p})$ where $p$ (respectively, $\hat{p}$) is the order of accuracy 
of the full discretization obtained by using the Runge--Kutta method with coefficients $(A,b)$ 
(respectively, $(A,\hat{b})$).
\end{theorem}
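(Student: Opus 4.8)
The plan is to exploit the fact that equation-based partitioning couples the two component methods only through their shared stage values, which reduces the local error analysis to a componentwise comparison with two ordinary Runge--Kutta methods.

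First I would observe that the stage values $\s{j}$ defined in \eqref{RK-stages} depend only on the coefficient matrix $A$ and on $U^n$; they involve neither the weights $b$, $\hat{b}$ nor the mask $\chi$. Consequently the $\s{j}$ are identical to the stage values that would be produced by either of the full (unpartitioned) Runge--Kutta methods $(A,b)$ or $(A,\hat{b})$ applied to \eqref{ode}. This is the decoupling that distinguishes the present situation from the general additive/partitioned setting, where the stages themselves differ between the component methods and extra coupling order conditions are needed.

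Next I would examine the update \eqref{EPRK-b} one component at a time. For a fixed index $i$ the mask value $\chi^n_i$ is either $0$ or $1$, so the bracketed sum collapses to exactly $\sum_j b_j g_i(\s{j})$ or exactly $\sum_j \hat{b}_j g_i(\s{j})$. In either case the formula for $u_i^{n+1}$ is identical---as an algebraic expression, not merely asymptotically---to the $i$th component of the step taken by one of the two ordinary Runge--Kutta methods, since both use the same stages $\s{j}$ and the same nonlinearity $g_i$. Therefore, starting from exact data $U^n = U(t^n)$, the local truncation error in component $i$ equals the local truncation error of the full method $(A,b)$ (if $\chi^n_i=1$) or of $(A,\hat{b})$ (if $\chi^n_i=0$) in that component, namely $\Oop(\Dt^{p+1})$ or $\Oop(\Dt^{\hat{p}+1})$ respectively.

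Finally I would take a norm over all components. Since each componentwise local error is bounded by $\Oop(\Dt^{\min(p,\hat{p})+1})$, the local error of the partitioned method is $\Oop(\Dt^{\min(p,\hat{p})+1})$, so its local order is at least $\min(p,\hat{p})$. For the reverse inequality one notes that, whenever the mask selects the lower-order method on at least one component, the leading error term of that method is generically nonzero, so the order cannot exceed $\min(p,\hat{p})$; this gives equality. I do not anticipate a genuine obstacle here---the whole argument rests on the single observation that the stages are shared---and the only point requiring a little care is to phrase the componentwise comparison as an exact identity, so that no cross-terms or coupling order conditions (of the kind that force the reduction in the flux-based Theorem~\ref{thm:mixtimeOrder:flux-based}) can arise.
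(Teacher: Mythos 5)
Your proposal is correct and follows essentially the same route as the paper: both arguments rest on the observation that, since the stages are shared, the partitioned update in component $i$ coincides exactly with the $i$th component of the step map $Q_i$ or $\widehat{Q}_i$ of one of the two full discretizations, after which the assumed truncation relations give the $\Oop(\Dt^{\min(p,\hat{p})+1})$ local error. Your added remark on the reverse inequality (that the order is generically not better than $\min(p,\hat{p})$) is a small extra the paper's proof does not bother to state, but it does not change the substance of the argument.
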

\begin{proof}
The full discretizations are one-step methods of the form 
\begin{align*}
w^{n+1}_i & = Q_i(W^n), \\
\widehat{w}^{n+1}_i & = \widehat{Q}_i(\widehat{W}^n).
\end{align*}
The exact solution satisfies
\begin{subequations}\label{trunc}
\begin{align} 
u(x_i,t_{n+1}) & = Q_i(u_h(t_n)) + \Oop(\Dt^{p+1}), \\
u(x_i,t_{n+1}) & = \widehat{Q}_i(u_h(t_n)) + \Oop(\Dt^{\hat{p}+1}).
\end{align}
\end{subequations}
where $u_h(t_n)=[u(x_1,t_n), u(x_2,t_n),\ldots,u(x_N,t_n)]^T$ is a vector of true solution
values at the grid nodes at time $t_n$.
Here we have assumed that $\Dx$ is given by some prescribed relationship in terms
of $\Dt$, so that the error can be characterized in terms of $\Dt$ only.

We now determine the local truncation error of the discretization:
starting from the exact solution at time $t_n$,
the solution computed by the partitioned method is
\begin{align*}
    u^{n+1}_i & = \chi_i^n Q_i(u_h(t_n)) + (1-\chi_i^n) \widehat{Q}_i(u_h(t_n)).
\end{align*}
Applying \eqref{trunc}, we find
\begin{align*}
u^{n+1}_i & =  \chi_i^n \cdot u(x_i,t_{n+1}) + \Oop(\Dt^{p+1}) + (1-\chi_i^n)\cdot u(x_i,t_{n+1}) + \Oop(\Dt^{\hat{p}+1}), \\
                 & = u(x_i,t_{n+1}) + \Oop(\Dt^{\min(p,\hat{p})+1}).
\end{align*}
\end{proof}

In order to prove accuracy of the flux-based decomposition approach, we need
to know how accurately a Runge--Kutta method approximates the fluxes.
\begin{lemma} \label{w-order-lemma}
Suppose we are given an initial value problem \eqref{ode} for $U\in\R^n$, a Runge--Kutta method $(A,b)$ of order $p$,
and a smooth function $V : \R^n \to \R^n$. Let $W(t) = \int_0^t V(U(s))ds$,
$W^n = W(t_n)$ and compute $W^{n+1}$ using \eqref{RK-stages} and
\begin{align} \label{w-scheme}
W^{n+1} = W^n + \Dt \sum_{j=1}^s b_j V(\s{j}).
\end{align}
This scheme approximates $W(t)$ to order $p$, i.e.
$W^{n+1} = W(t_{n+1}) + \Oop(\Dt^{p+1})$.
\end{lemma}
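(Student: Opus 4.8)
The plan is to reduce the statement to the classical order-$p$ convergence of a single Runge--Kutta method applied to an augmented system. The key observation is that the stages $\s{j}$ in \eqref{RK-stages} are generated by the method $(A,b)$ acting on the $U$-equation alone, while the function $V$ enters only through the final weighted sum in \eqref{w-scheme}. This is precisely the structure one obtains by appending a quadrature variable to the original ODE, and the lemma is really the assertion that such an appended quadrature inherits the order of the base method.

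Concretely, first I would introduce the augmented autonomous system for the pair $(U,W)$,
\begin{align*}
  U'(t) &= G(U), & W'(t) &= V(U),
\end{align*}
whose right-hand side $(G(U),V(U))$ is smooth because both $G$ and $V$ are smooth. Its exact solution issuing from $(U(t_n),W(t_n))$ is exactly $(U(t),W(t))$ with $W(t)=\int_0^t V(U(s))\,ds$, since $\tfrac{d}{dt}\int_0^t V(U(s))\,ds = V(U(t))$.

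Next I would apply the single Runge--Kutta method $(A,b)$ to this augmented system and check that it reproduces the scheme in the lemma. Because the $U$-component of the right-hand side does not depend on $W$, the stage equations for the $U$-variable decouple and coincide with the $\s{j}$ defined in \eqref{RK-stages}; the stage values for the $W$-variable need not even be computed, since $V$ depends only on $U$. The update for the $W$-variable is then $W^{n+1} = W^n + \Dt \sum_{j=1}^s b_j V(\s{j})$, which is exactly \eqref{w-scheme}.

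Finally, I would invoke the standard result that a Runge--Kutta method of order $p$ applied to a system with smooth right-hand side has local truncation error $\Oop(\Dt^{p+1})$ in every component. Applied to the augmented system started from the exact values at $t_n$, this yields $W^{n+1} = W(t_{n+1}) + \Oop(\Dt^{p+1})$, which is the claim. There is no serious obstacle beyond setting up the augmentation correctly; the only point deserving a moment's care is confirming that the decoupled structure (the $W$-equation being a pure quadrature whose right-hand side is independent of $W$) causes no loss of order. It does not, because the classical order conditions hold for arbitrary smooth autonomous systems, quadrature problems included as the special case in which part of the vector field is independent of part of the state.
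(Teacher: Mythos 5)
Your proof is correct and follows essentially the same route as the paper: both augment the ODE with the quadrature variable $W' = V(U)$, observe that the $W$-stages are never needed because $V$ depends only on $U$, and then invoke the order-$p$ accuracy of the Runge--Kutta method applied to the augmented smooth system. No gaps; your write-up is if anything slightly more explicit than the paper's about why the decoupled quadrature structure causes no loss of order.
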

\begin{proof}
Suppose an $\Oop(\Dt^p)$ Runge--Kutta method is applied to the system
\begin{align*}
U' & = G(U), \\
W' & = V(U).
\end{align*}
Stage values are given by
\begin{align*} 
\s{j}    & =  U^n + \Dt \sum_{k=1}^s a_{jk} G(\s{k}) \\
Z^{(j)}  & =  W^n + \Dt \sum_{k=1}^s a_{jk} V(\s{k}), j=1,\ldots,s.
\end{align*}
The $Z^{(j)},  j=1,\ldots,s,$ are not used and would not be computed in practice.
We advance by one step via
\begin{align*}
U^{n+1} & = U^n + \Dt \sum_{j=1}^s b_j G(\s{j}),  \\
W^{n+1} & = W^n + \Dt \sum_{j=1}^s b_j V(\s{j}).
\end{align*}
Recall that the Runge--Kutta method gives an error that is $\Oop(\Dt^p)$.  Applying this fact
to the second equation gives the desired result.
\end{proof}

From Lemma \ref{w-order-lemma} we obtain:
\begin{lemma} \label{pairs-small-diff}
Let $V : \R^n \to \R^n$ be a smooth function, and let $(A,b),(A,\hat{b})$ be an
embedded RK pair with order $p,\hat{p}$.
Given an initial value problem \eqref{ode} in $U$, let $\s{j}$ denote the stage
values in \eqref{RK-stages}; then
\begin{align} \label{V_acc}
\sum_{j=1}^s b_j V(\s{j}) = \sum_{j=1}^s \hat{b}_j V(\s{j}) + \Oop(\Dt^{\min(p,\hat{p})}).
\end{align}
\end{lemma}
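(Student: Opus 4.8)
The plan is to apply Lemma~\ref{w-order-lemma} twice---once for each member of the embedded pair---and then subtract the two resulting approximations. The crucial structural observation, which makes the whole argument work, is that because $(A,b)$ and $(A,\hat{b})$ share the same coefficient matrix $A$, the stage values $\s{j}$ defined by \eqref{RK-stages} are \emph{identical} for both methods. Consequently the same quantities $V(\s{j})$ enter both weighted sums appearing in \eqref{V_acc}, and the only difference between the two sides is the choice of weights $b$ versus $\hat{b}$.

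Concretely, I would introduce the primitive $W(t)=\int_0^t V(U(s))\,ds$ exactly as in Lemma~\ref{w-order-lemma}, set $W^n = W(t_n)$, and use scheme \eqref{w-scheme}. Applying the lemma to the order-$p$ method $(A,b)$ gives
\begin{equation*}
W^n + \Dt \sum_{j=1}^s b_j V(\s{j}) = W(t_{n+1}) + \Oop(\Dt^{p+1}),
\end{equation*}
while applying it to the order-$\hat{p}$ method $(A,\hat{b})$---using the \emph{very same} stage values $\s{j}$---gives
\begin{equation*}
W^n + \Dt \sum_{j=1}^s \hat{b}_j V(\s{j}) = W(t_{n+1}) + \Oop(\Dt^{\hat{p}+1}).
\end{equation*}

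Subtracting the second identity from the first cancels both $W^n$ and the common exact value $W(t_{n+1})$, leaving
\begin{equation*}
\Dt \left( \sum_{j=1}^s b_j V(\s{j}) - \sum_{j=1}^s \hat{b}_j V(\s{j}) \right) = \Oop(\Dt^{\min(p,\hat{p})+1}),
\end{equation*}
and dividing through by $\Dt$ yields \eqref{V_acc}. There is no serious obstacle in this argument: it is essentially a one-line consequence of Lemma~\ref{w-order-lemma} combined with the shared-$A$ property. The only points requiring genuine care are to verify that the stage values really are common to both methods---this is exactly what licenses treating the two weighted sums as differing solely through the weights---and to track that after dividing the $\Oop(\Dt^{\min(p,\hat{p})+1})$ residual by $\Dt$, the surviving error is of order $\min(p,\hat{p})$, matching the claimed estimate.
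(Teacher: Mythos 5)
Your proof is correct and follows exactly the paper's own argument: apply Lemma~\ref{w-order-lemma} to each member of the pair (with the same stage values, thanks to the shared $A$), subtract the two resulting identities, and divide by $\Dt$. The paper compresses the subtraction into the phrase ``combining these gives the stated result,'' but the content is identical to what you wrote.
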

\begin{proof}
Applying Lemma \ref{w-order-lemma} to both schemes gives
\begin{align*}
    W(t) + \Dt \sum_{j=1}^s       b_j V(\s{j}) = W(t+\Dt) + \Oop(\Dt^{p+1}), \\
    W(t) + \Dt \sum_{j=1}^s \hat{b}_j V(\s{j}) = W(t+\Dt) + \Oop(\Dt^{\hat{p}+1}).
\end{align*}
Combining these gives the stated result.
\end{proof}

One must take care in applying Lemma \ref{pairs-small-diff} to a PDE semi-discretization,
since the error constant appearing in \eqref{V_acc} might involve a factor like
$\Dx^{-r}$, $r>0$, which would grow as the spatial grid is refined.
A detailed analysis of the propagation of the spatial discretization
error within a Runge--Kutta step is beyond the scope of this work;
we refer the interested reader to
\cite{Sanzserna:1986:RKorderreduction,Carpenter/Gottlieb/Don:1995:RKorder}.
In the following theorem we simply
assume that the error constant in \eqref{V_acc} is bounded as $\Dx \to 0$.
Note that this is an assumption on each of the component RK schemes separately.
Theorem \ref{thm:mixtimeOrder:flux-based} indicates that if each of the component schemes gives a solution
free from order reduction (in the sense just described), then the embedded pair
loses at most one order of accuracy.

\begin{theorem}\label{thm:mixtimeOrder:flux-based}
Suppose that the flux-based spatially partitioned Runge--Kutta method \eqref{flux-based} with coefficients $(A,b,\hat{b})$
is applied to the semi-discretization \eqref{eq:fluxdiff} with $\Dt=\Oop(\Dx)$.  Furthermore,
suppose that the error constant appearing in \eqref{V_acc} is bounded as $\Dx \to 0$ when
one takes $V=f_\ipmh$.
Then the full discretization of \eqref{eq:fluxdiff}  has order of accuracy equal to $\min(p,\hat{p})-1$
where $p$ (respectively, $\hat{p}$) is the order of accuracy of the full discretization obtained
by using the RK method with coefficients $(A,b)$ (respectively, $(A,\hat{b})$).
\end{theorem}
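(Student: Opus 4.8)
The plan is to write the flux-based step as one step of the pure $(A,b)$-method plus a small correction, and then to bound that correction using Lemma~\ref{pairs-small-diff}. Introduce the edge fluxes computed with each set of weights,
\begin{align*}
F_\iph^{b} = \sum_{j=1}^s b_j f_\iph(\s{j}),
\qquad
F_\iph^{\hat b} = \sum_{j=1}^s \hat b_j f_\iph(\s{j}),
\end{align*}
so that the partitioned flux appearing in \eqref{flux-based} is
$\chi_\iph^n F_\iph^{b} + (1-\chi_\iph^n) F_\iph^{\hat b} = F_\iph^{b} - (1-\chi_\iph^n)\bigl(F_\iph^{b} - F_\iph^{\hat b}\bigr)$.
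Substituting this (and the analogous expression at $x_\imh$) into the update \eqref{flux-based} and collecting terms, the flux-based step reads
\begin{align*}
u_i^{n+1} = u_i^{n+1,b} + \frac{\Dt}{\Dx}\Bigl[(1-\chi_\iph^n)\bigl(F_\iph^{b}-F_\iph^{\hat b}\bigr) - (1-\chi_\imh^n)\bigl(F_\imh^{b}-F_\imh^{\hat b}\bigr)\Bigr],
\end{align*}
where $u_i^{n+1,b}$ denotes the result of one step of the pure $(A,b)$-method applied to \eqref{eq:fluxdiff}.

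Next I would estimate the correction. Applying Lemma~\ref{pairs-small-diff} with $V = f_\ipmh$, and invoking the hypothesis that the error constant in \eqref{V_acc} is bounded as $\Dx\to0$, each edge discrepancy satisfies $F_\ipmh^{b}-F_\ipmh^{\hat b} = \Oop(\Dt^{\min(p,\hat p)})$ uniformly in the grid. Since $\Dt = \Oop(\Dx)$ gives $\Dt/\Dx = \Oop(1)$, the entire correction term is $\Oop(\Dt^{\min(p,\hat p)})$.

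Finally I would assemble the local truncation error. Starting from exact grid data $u_h(t_n)$, the pure $(A,b)$-method has local error $\Oop(\Dt^{p+1})$, i.e. $u_i^{n+1,b} = u(x_i,t_{n+1}) + \Oop(\Dt^{p+1})$ in the style of \eqref{trunc}. Combining this with the correction bound yields
\begin{align*}
u_i^{n+1} = u(x_i,t_{n+1}) + \Oop(\Dt^{p+1}) + \Oop(\Dt^{\min(p,\hat p)}) = u(x_i,t_{n+1}) + \Oop(\Dt^{\min(p,\hat p)}),
\end{align*}
so the one-step error is $\Oop(\Dt^{\min(p,\hat p)})$ and the local order of accuracy is $\min(p,\hat p)-1$.

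The main obstacle is the factor $\Dt/\Dx$ multiplying the correction. For a genuine flux-differencing method the difference $F_\iph-F_\imh$ of neighbouring edge fluxes carries an extra power of $\Dx$ that cancels this factor and restores full order; here, however, the discrepancy $F_\iph^{b}-F_\iph^{\hat b}$ lives at a single edge and does not telescope against its neighbours, so only the $\Oop(1)$ factor $\Dt/\Dx$ is available and one order is irretrievably lost. Making the $\Oop(\Dt^{\min(p,\hat p)})$ bound on the edge discrepancies rigorous is exactly where the boundedness assumption on the error constant in \eqref{V_acc} is needed, since a priori that constant could contain a negative power of $\Dx$; controlling the interaction between the spatial-discretization error and the Runge--Kutta step is the delicate point, and is what the cited analyses \cite{Sanzserna:1986:RKorderreduction,Carpenter/Gottlieb/Don:1995:RKorder} address.
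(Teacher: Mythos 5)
Your proposal is correct and follows essentially the same route as the paper's own proof: both rewrite the partitioned update as one step of the pure $(A,b)$-method plus an edge-flux correction, bound that correction by $\Oop(\Dt^{\min(p,\hat p)})$ via Lemma~\ref{pairs-small-diff} together with the bounded-error-constant hypothesis and $\Dt/\Dx = \Oop(1)$, and then invoke the assumed order $p$ of the pure scheme to get a local error of $\Oop(\Dt^{\min(p,\hat p)})$. The only difference is cosmetic bookkeeping: the paper absorbs the correction as $\Oop(\Dt^{\min(p,\hat p)+1}/\Dx)$ before applying $\Dt=\Oop(\Dx)$, whereas you factor out $(1-\chi)$ times the edge discrepancies explicitly.
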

\begin{proof}
The full discretizations are
\begin{align*}
    w^{n+1}_i & = w^n_i - \frac{\Dt}{\Dx}\left(\sum_{j=1}^s b_j f_\iph(\s{j}) - \sum_{j=1}^s b_j f_\imh(\s{j})\right), \\
    \hat{w}^{n+1}_i & = \hat{w}^n_i - \frac{\Dt}{\Dx}\left(\sum_{j=1}^s \hat{b}_j f_\iph(\s{j}) - \sum_{j=1}^s \hat{b}_j f_\imh(\s{j})\right).
\end{align*}
Using Lemma~\ref{pairs-small-diff} with $V(u) = f_\ipmh(u)$ gives
\begin{align} \label{flux-accuracy}
  \sum_{j=1}^s b_j f_\ipmh(\s{j}) = \sum_{j=1}^s \hat{b}_j f_\ipmh(\s{j}) + \Oop(\Dt^{\min(p,\hat{p})}).
\end{align}
Flux-based partitioning gives
\begin{align*}
    u_i^{n+1} & = u_i^n - \frac{\Dt}{\Dx}
        \Bigg(\chi^n_\iph \sum_{j=1}^s b_j f_\iph(\s{j}) + (1-\chi^n_\iph) \sum_{j=1}^s \hat{b}_j f_\iph(\s{j}) \\
        &\qquad\qquad\qquad
            - \chi^n_\imh \sum_{j=1}^s b_j f_\imh(\s{j}) - (1-\chi^n_\imh) \sum_{j=1}^s \hat{b}_j f_\imh(\s{j})\Bigg) \\
            & = u_i^n - \frac{\Dt}{\Dx}
        \left(\sum_{j=1}^s b_j f_\iph(\s{j}) - \sum_{j=1}^s b_j f_\imh(\s{j})\right) + \Oop(\Dt^{\min(p,\hat{p})+1}/\Dx),
\end{align*}
where we have used \eqref{flux-accuracy}.
The first two terms on the right are just the solution computed by the scheme
with weights $b_j$, which is accurate to order $p$ by assumption.
Thus, inserting the exact solution in the last equation above
and using the assumption that $\dt = \Oop(\dx)$
gives the desired result.
\end{proof}
\begin{remark}
More generally, for a semi-discretization of an evolution PDE that is order $q$
in space, one takes (for explicit methods) $\Dt = \Oop(\Dx^q)$ and a factor
of $\Dx^{-q}$ appears in the spatial discretization.  The net effect is the
same in that it may reduce the accuracy by a factor of $\Dt$.
\end{remark}

\begin{remark}
It might be tempting to try to apply the theorem to the general
additive Runge--Kutta method
\begin{align}\label{eq:general_ark}
  \begin{array}{c|c}
    c & A \\
    \hline
    & \rule{0pt}{1.05em} b^T
  \end{array}
  \qquad\text{with}\qquad
  \begin{array}{c|c}
    \hat{c} & \hat{A} \\
    \hline
    & \rule{0pt}{1.05em} \hat{b}^T
  \end{array},
\end{align}
by first making a larger additive Runge--Kutta method that \emph{does} share
abscissae and stage weights, namely
\begin{align}\label{eq:bigembed}
  \begin{array}{c|ccc}
    c & A & 0\\
    \hat{c} & 0 & \hat{A} \\
    \hline
    & \rule{0pt}{1.05em} b^T & 0
  \end{array}
  \qquad\text{with}\qquad
  \begin{array}{c|ccc}
    c & A & 0\\
    \hat{c} & 0 & \hat{A} \\
    \hline
    & 0 &  \rule{0pt}{1.05em} \hat{b}^T
  \end{array},
\end{align}
and then concluding that Theorem~\ref{thm:mixtimeOrder} applied to the latter
implies the results on the former.
Indeed the theorem does apply to \eqref{eq:bigembed}; however,
the implication does not follow because \eqref{eq:general_ark} and
\eqref{eq:bigembed} are \emph{not the same method}.
To see this, note that \eqref{eq:general_ark} computes stages combining function values
from the two schemes (according to $\chi$) whereas
\eqref{eq:bigembed} computes each stage value using only one scheme.
\end{remark}

\subsection{Positivity} \label{sec:positivity}
The nonlinear stability properties of SPERK schemes are of interest as well.
In this section we consider the positivity of flux-based SPERK schemes
that are comprised of two explicit SSP
Runge--Kutta schemes.

We begin by reviewing some results.   Of particular relevance to our derivations is that
SSP Runge--Kutta schemes may be re-written in optimal Shu--Osher form \cite{Shu:1988}
via the transformation provided in \cite{Ferracina/Spijker:SO05}.
For the ODE system $U'=G(U)$ this yields schemes of the form
\begin{subequations}
\label{eq:alpha_beta_rk}
\begin{align}
\s{j} & = \left(1-\sum_{k=1}^{j-1}\alpha_{jk} \right) U^n + \sum_{k=1}^{j-1} (\alpha_{jk} \s{k} +
              \Delta t \beta_{jk} G(\s{k})), \\
U^{n+1} & = \s{s+1}
\end{align}
\end{subequations}
where all  $\alpha_{jk} \geq 0,$ $\sum_{k=1}^{j-1}\alpha_{jk} \le 1$ and $j=1,2,\ldots,s+1$.

If both sets of coefficients $\alpha_{jk},
\beta_{jk}$ are nonnegative, 
and forward Euler is
positivity-preserving, then it may be shown that
the Runge--Kutta scheme preserves positivity under
a suitable time step restriction
\cite{Shu/Osher:1988,Gottlieb/Shu/Tadmor:2001}:

\begin{lemma}
\label{thm:cfl_pos}
If the forward Euler method is positivity-preserving under the 
restriction $0 \le \Delta t \le \Delta t_{FE}$, then the Runge--Kutta
method~(\ref{eq:alpha_beta_rk}) with $\beta_{jk}\ge 0$ is positivity-preserving provided
\[
\Delta t \le \sspcoeff \Delta t_{FE},
\]
where $\sspcoeff$ is the SSP coefficient
\[
\sspcoeff \equiv \min \left\{c_{jk}: 1\le k < j \le s+1\right\} \mbox{~where~} c_{jk}=\left\{\begin{array}{ll}
\frac{\alpha_{jk}}{\beta_{jk}} & \mbox{if $\beta_{jk} \neq 0,$ } \\
\infty & \mbox{otherwise.}
\end{array}
\right.
\]
\end{lemma}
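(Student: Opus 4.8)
The plan is to exploit the convex-combination structure built into the optimal Shu--Osher form \eqref{eq:alpha_beta_rk} and to reduce positivity of the full scheme to positivity of a collection of forward Euler steps, each taken with an effective step size no larger than $\Dt_{FE}$. The whole argument is the classical SSP estimate, so no genuinely hard step is expected.

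First I would rewrite each summand in the $j$th stage. Whenever $\alpha_{jk}>0$ we can write
\[
\alpha_{jk}\s{k} + \Dt\beta_{jk}G(\s{k})
  = \alpha_{jk}\bigl(\s{k} + \tfrac{\Dt\beta_{jk}}{\alpha_{jk}}\,G(\s{k})\bigr),
\]
and the parenthesized expression is precisely a forward Euler step applied to $\s{k}$ with step size $\Dt\beta_{jk}/\alpha_{jk} = \Dt/c_{jk}$. The stage formula then takes the form of a weighted average of $U^n$ and these forward Euler updates, with weights $1-\sum_{k=1}^{j-1}\alpha_{jk}$ and $\alpha_{jk}$. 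Because $\alpha_{jk}\ge0$ and $\sum_{k=1}^{j-1}\alpha_{jk}\le1$, these weights are nonnegative and sum to one, so $\s{j}$ is a genuine convex combination. The terms with $\beta_{jk}=0$ need no rewriting: they contribute $\alpha_{jk}\s{k}$ directly, i.e.\ a forward Euler step of size zero.

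Next I would verify the step-size bound on each forward Euler substep. From $\Dt\le\sspcoeff\Dt_{FE}$ and $\sspcoeff\le c_{jk}$ we obtain $\Dt/c_{jk}\le\Dt_{FE}$, so every substep lies within the range in which forward Euler is assumed to preserve positivity. The conclusion then follows by induction on $j$. For the base case, the empty sum gives $\s{1}=U^n\ge0$. Assuming $\s{1},\dots,\s{j-1}\ge0$, each forward Euler substep produces a nonnegative vector, and since the nonnegative orthant is convex, the convex combination $\s{j}$ is again nonnegative. Taking $j=s+1$ yields $U^{n+1}=\s{s+1}\ge0$.

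The only subtlety worth flagging is the degenerate possibility $\alpha_{jk}=0$ with $\beta_{jk}\ne0$: there $c_{jk}=0$ forces $\sspcoeff=0$, and the hypothesis $\Dt\le\sspcoeff\Dt_{FE}$ collapses to $\Dt=0$. In the nontrivial regime $\sspcoeff>0$ one necessarily has $\alpha_{jk}>0$ whenever $\beta_{jk}>0$, which is exactly what makes the rewriting above legitimate. This is the main (and essentially only) place where care is required.
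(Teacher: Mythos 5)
Your proof is correct, and it is the classical convex-combination argument: rewrite each stage of the Shu--Osher form \eqref{eq:alpha_beta_rk} as a convex combination of $U^n$ and forward Euler steps with effective step sizes $\Dt\beta_{jk}/\alpha_{jk} \le \Dt_{FE}$, then conclude by induction on the stages, with the degenerate case $\alpha_{jk}=0$, $\beta_{jk}\neq 0$ correctly dispatched via $\sspcoeff=0$. The paper itself offers no proof of this lemma---it simply cites the SSP literature (Shu--Osher; Gottlieb, Shu and Tadmor)---and your argument is precisely the standard proof given in those references, so there is nothing to compare beyond noting that you have supplied the proof the paper omits.
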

We now show the corresponding result for flux-based SPERK schemes.
\begin{theorem}
Suppose that the flux-differencing semi-discretization \eqref{eq:fluxdiff}
of the one-dimensional conservation law~(\ref{conslaw}) satisfies the positivity
properties
\begin{align}
w_i -  \frac{\delta t_1}{\Dx} f_\iph(W) & \ge 0, \: \mbox{for all $\delta t_1 \le \Dt_1$}, \label{eq:hypoth1} \\
w_i +  \frac{\delta t_2}{\Dx} f_\imh(W) & \ge 0, \: \mbox{for all $\delta t_2 \le \Dt_2$}  \label{eq:hypoth2}
\end{align}
for all $w_i\ge0, 1\le i \le N$.  Further, suppose that
a flux-based SPERK scheme is applied to (\ref{eq:fluxdiff}), and
that the schemes composing the embedded pair have SSP coefficients
$\sspcoeff$ and $\hat{\sspcoeff}$.  Then the full discretization is 
positivity-preserving for time steps satisfying
\begin{equation}
0\le \Dt \le \min(\sspcoeff,\hat{\sspcoeff}) \Dt_* \label{eq:positivity}
\end{equation}
where 
\[
\Dt_* = \left\{ \begin{array}{ll}
\Dt_2 & \mbox{if $\Dt_1=\infty$},\\
\Dt_1 & \mbox{if $\Dt_2=\infty$},\\
\frac{\Dt_1 \Dt_2}{\Dt_1 + \Dt_2} & \mbox{otherwise.}
\end{array}
\right.
\]
\end{theorem}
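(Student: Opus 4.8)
The plan is to prove positivity by induction over the levels of the scheme, showing that if $U^n$ and the previously computed stages $\s{1},\dots,\s{j-1}$ are nonnegative then so is $\s{j}$, and ultimately $U^{n+1}$. The engine of the whole argument is one elementary estimate distilled from the hypotheses \eqref{eq:hypoth1}--\eqref{eq:hypoth2}: for any $W\ge 0$ and any $\tau^+,\tau^-\ge 0$ with $\tau^+/\Dt_1 + \tau^-/\Dt_2 \le 1$ (with the convention $\tau/\infty = 0$), one has $w_i - \frac{1}{\Dx}\big(\tau^+ f_\iph(W) - \tau^- f_\imh(W)\big)\ge 0$. This follows by writing this quantity as the convex combination $\frac{\tau^+}{\Dt_1}\big(w_i - \frac{\Dt_1}{\Dx}f_\iph(W)\big) + \frac{\tau^-}{\Dt_2}\big(w_i + \frac{\Dt_2}{\Dx}f_\imh(W)\big) + \big(1-\frac{\tau^+}{\Dt_1}-\frac{\tau^-}{\Dt_2}\big)w_i$ and applying \eqref{eq:hypoth1}, \eqref{eq:hypoth2} and $W\ge 0$ term by term. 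Taking $\tau^+=\tau^-=\tau$ (equal treatment of both fluxes), the feasibility condition $\tau(1/\Dt_1+1/\Dt_2)\le 1$ inverts to precisely $\tau\le\Dt_*$, and the degenerate forms $\Dt_*=\Dt_2$ and $\Dt_*=\Dt_1$ fall out when $\Dt_1$ or $\Dt_2$ is infinite.

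Next I would recast the flux-based scheme in additive Shu--Osher form, writing each level $j=1,\dots,s+1$ (with $\s{s+1}=U^{n+1}$) as $\s{j} = \big(1-\sum_{k<j}\alpha_{jk}\big)U^n + \sum_{k<j}\big(\alpha_{jk}\s{k} + \Dt\,\beta_{jk}G_\chi(\s{k}) + \Dt\,\gamma_{jk}G_{1-\chi}(\s{k})\big)$. The $i$th component of the $k$th summand has exactly the form handled by the elementary estimate, with $\tau^\pm = \Dt\,B^\pm_{jk}/\alpha_{jk}$, where $B^+_{jk},B^-_{jk}\in\{\beta_{jk},\gamma_{jk}\}$ are selected by the values of $\chi$ at the edges $x_\iph$ and $x_\imh$. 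Nonnegativity of each summand therefore reduces to the single row condition $\Dt\big(B^+_{jk}/\Dt_1 + B^-_{jk}/\Dt_2\big)\le \alpha_{jk}$, and $\s{j}\ge 0$ follows once this holds for every $k<j$.

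For the interior stages ($j\le s$) the splitting applies $G_\chi$ and $G_{1-\chi}$ with equal weight, since both enter only through the shared matrix $A$; thus $\beta_{jk}=\gamma_{jk}$ and the row condition collapses to $\Dt \le (\alpha_{jk}/\beta_{jk})\,\Dt_*$. Choosing the optimal Shu--Osher representation of either component method makes every such ratio at least the SSP coefficient of that method, hence at least $\min(\sspcoeff,\hat{\sspcoeff})$, so the stages are nonnegative throughout the claimed step range (this uses only $\s{k}\ge0$ and is independent of how the final level is represented). The final level is where the weight vectors $b$ and $\hat{b}$ genuinely differ; because $\chi$ may select $b$ on one edge and $\hat{b}$ on the other, the worst case forces $B^+_{s+1,k}=B^-_{s+1,k}=\max(\beta_{s+1,k},\gamma_{s+1,k})$ and hence the requirement $\Dt\le \big(\alpha_{s+1,k}/\max(\beta_{s+1,k},\gamma_{s+1,k})\big)\Dt_*$.

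The crux, and the step I expect to be the main obstacle, is producing a single last-row vector $\alpha_{s+1,\cdot}\ge 0$ admissible for both component methods at once: one needs $\beta_{s+1,k}=b_k-\sum_l\alpha_{s+1,l}a_{lk}\ge 0$ and $\gamma_{s+1,k}=\hat{b}_k-\sum_l\alpha_{s+1,l}a_{lk}\ge 0$, with both ratios $\alpha_{s+1,k}/\beta_{s+1,k}$ and $\alpha_{s+1,k}/\gamma_{s+1,k}$ at least $\min(\sspcoeff,\hat{\sspcoeff})$. Since the difference $\beta_{s+1,k}-\gamma_{s+1,k}=b_k-\hat{b}_k$ is fixed, this is exactly the assertion that an embedded pair sharing $A$ admits a \emph{joint} nonnegative Shu--Osher representation whose SSP coefficient is $\min(\sspcoeff,\hat{\sspcoeff})$. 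I would establish its existence from $A\ge 0$ together with the optimal representations of the two methods (a small two-stage case already shows the joint vector need not coincide with either method's own last row, which fixes the construction). With that representation in hand, the elementary estimate closes the induction and yields positivity for all $0\le\Dt\le\min(\sspcoeff,\hat{\sspcoeff})\Dt_*$, with the stated degenerate forms of $\Dt_*$.
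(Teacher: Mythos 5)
Your opening estimate is exactly the engine of the paper's proof: the paper writes $u^n_i = \gamma_p u^n_i + \gamma_m u^n_i$ with $\gamma_p = \Dt_2/(\Dt_1+\Dt_2)$, $\gamma_m = \Dt_1/(\Dt_1+\Dt_2)$, so that \eqref{eq:hypoth1} and \eqref{eq:hypoth2} each act on their own share of $u^n_i$, and this is precisely your convex-combination lemma with $\tau^+=\tau^-\le\Dt_*$; stage nonnegativity via Lemma~\ref{thm:cfl_pos} is also common to both arguments. The divergence, and the genuine gap, is at the final level. Your route requires a \emph{joint} nonnegative Shu--Osher last row $\alpha_{s+1,\cdot}$ that serves both weight vectors at once, with $\beta_{s+1,k}=b_k-\sum_l\alpha_{s+1,l}a_{lk}\ge 0$, $\gamma_{s+1,k}=\hat b_k-\sum_l\alpha_{s+1,l}a_{lk}\ge 0$ and both ratio families bounded below by $\min(\sspcoeff,\hat{\sspcoeff})$. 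You correctly identify this as the crux, but you do not prove it; you only assert you ``would establish its existence,'' and your parenthetical two-stage example shows at most what the joint row looks like in one case, not that it exists in general. The claim is not routine: the canonical optimal last rows $r\,b^T(I+rA)^{-1}$ and $r\,\hat b^T(I+rA)^{-1}$ differ whenever $b\ne\hat b$, and the obvious repair (componentwise minimum) preserves $\beta,\gamma\ge 0$ but destroys the ratio conditions $\alpha_{s+1,k}\ge r\beta_{s+1,k}$, $\alpha_{s+1,k}\ge r\gamma_{s+1,k}$. Existence of such a joint row is essentially a statement about absolute monotonicity of the embedded pair viewed as an additive method, i.e.\ a statement of the same depth as the theorem itself, so as written your argument is incomplete at its decisive step.

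The paper's proof shows this obstacle is avoidable, and that is the idea you are missing. Instead of forcing one Shu--Osher row to serve both weight vectors, the paper splits the update into \emph{four} terms in \eqref{eq:terms}, indexed by edge and by the value of $\chi$ there, apportioning $u^n_i$ between the edges via $\gamma_p+\gamma_m=1$: for instance $\chi^n_\iph\bigl(\gamma_p u^n_i - \lambda\sum_j b_j f_\iph^{(j)}\bigr)$ and $(1-\chi^n_\iph)\bigl(\gamma_p u^n_i - \lambda\sum_j \hat b_j f_\iph^{(j)}\bigr)$, and similarly for the $x_\imh$ flux with $\gamma_m$. Each term contains only one weight vector and only one flux, so it is transformed with that single method's own optimal Shu--Osher representation (as in \eqref{eq:chkpos}) and closed using stage nonnegativity together with the one matching hypothesis, \eqref{eq:hypoth1} for the $\iph$ terms and \eqref{eq:hypoth2} for the $\imh$ terms; since the prefactors $\chi^n_\ipmh$, $1-\chi^n_\ipmh$ form convex combinations, nonnegativity of the four terms gives $u^{n+1}_i\ge 0$. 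The step constraint from each term is exactly $\Dt\le\min(\sspcoeff,\hat{\sspcoeff})\,\gamma_p\Dt_1 = \min(\sspcoeff,\hat{\sspcoeff})\,\Dt_*$ (respectively $\gamma_m\Dt_2=\Dt_*$), which is the claimed bound. To repair your proof, replace the joint-row lemma by this four-way grouping; no joint representation is then needed.
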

\begin{proof}
We give a proof for the case where  $\Dt_1$ and $\Dt_2$ are finite; the proof
for the two remaining cases is straightforward and follows in a similar fashion.

Let $\chi_\iph^n$ and $1-\chi_\iph^n$ specify the (non-negative) weightings of the two schemes
at the cell boundaries at time step $n$.  The update for $U$ is given by
\begin{align}
u^{n+1}_i & =  u^n_i + \lambda \sum_{j=1}^{s}       b_i\left( -  \chi_\iph^n f_\iph^{(j)} +  \chi_\imh^n f_\imh^{(j)}\right)
   \nonumber \\  &\qquad\qquad\qquad\qquad
     +  \lambda \sum_{j=1}^{s} \hat{b}_i\left( -  (1-\chi_\iph^n) f_\iph^{(j)} +  (1-\chi_\imh^n) f_\imh^{(j)}\right) \nonumber \\
          & =  \chi_\iph^n \left(\gamma_p u^n_i + \lambda \sum_{j=1}^{s} - b_i f_\iph^{(j)}\right) +  (1-\chi_\iph^n) \left(\gamma_p u^n_i   + \lambda \sum_{j=1}^{s}       - \hat{b}_i f_\iph^{(j)}\right)  \nonumber \\
          &\,\,\, +  \chi_\imh^n \left(\gamma_m u^n_i + \lambda \sum_{j=1}^{s}  b_i f_\imh^{(j)}\right) +  (1-\chi_\imh^n) \left(\gamma_m u^n_i   + \lambda \sum_{j=1}^{s}        \hat{b}_i f_\imh^{(j)}\right) \label{eq:terms}
\end{align}
where $\lambda = \frac{\Dt}{\Delta x}$, $\gamma_p = \frac{\Dt_2}{\Dt_1+\Dt_2}$, and $\gamma_m = \frac{\Dt_1}{\Dt_1+\Dt_2}$.
We denote the fluxes associated with stage $j$ by $f_\ipmh^{(j)} \equiv f_\ipmh(\s{j})$ where $\s{j}$ is the
approximation of $u_h(t_n)$ at the $j$th stage.
The first term will be non-negative if
\begin{equation}
\gamma_p u^n_i + \lambda \sum_{j=1}^{s} - b_i f_\iph^{(j)} \label{eq:chkpos}
\end{equation}
is non-negative.  Transforming~(\ref{eq:chkpos}) to optimal Shu--Osher form yields
\[
\gamma_p \left(1-\sum_{j=1}^s \alpha_{s+1,j} \right)u^n_i +
          \gamma_p \left( \sum_{j=1}^{s} \alpha_{s+1,j} y_i^{(j)} - \frac{\lambda}{\gamma_p} \beta_{s+1,j} f_\iph^{(j)}\right).
\]
We observe that forward Euler is 
positivity-preserving when applied to the semi-discretization \eqref{eq:fluxdiff} provided $\Dt \le \Dt_*$.
An application of  Lemma~\ref{thm:cfl_pos} gives  $\s{j} \ge 0, 1\le j \le s$, 
provided the step-size restriction (\ref{eq:positivity}) is satisfied and $U^n \ge 0$, where 
vector inequalities are to be interpreted as being taken componentwise.
Combining this result with $\Dt \le \sspcoeff \Dt_*$ and hypothesis~(\ref{eq:hypoth1}) yields that the first term 
in (\ref{eq:terms}) is non-negative.  The proof follows by applying a similar analysis to
the second, third, and fourth terms in (\ref{eq:terms}).
\end{proof}

The theorem may be applied to a variety of common discretizations.  For example, consider
an upwind discretization of the linear advection equation
\[
u_t + a u_x = 0, \quad a>0,
\]
on a grid with uniform spacing $\Delta x$. In this example $f_\iph = a u_i$  from which we observe that
 $\Dt_2 = \infty$ and $\Dt_1=\frac{1}{a}\Delta x$.
Assuming that the schemes forming the embedded pair
have SSP coefficients $\sspcoeff$ and $\hat{\sspcoeff}$ we find that
the flux-based SPERK scheme is
positivity-preserving provided
\begin{equation}
\Delta t \le \frac{1}{a} \min(\sspcoeff,\hat{\sspcoeff}) \Delta x. 
\end{equation}

We can also apply the theorem to a discretization of the diffusion equation.  Consider
\[
u_t - \nu u_{xx} = 0, \nu>0,
\]
on a grid with uniform spacing $\Delta x$. Taking $f_\iph = \frac{\nu}{\Delta x}(-u_{i+1}+u_i)$ gives
$\Dt_1 = \Dt_2 = \frac{1}{\nu}\Delta x^2$.  Assuming that the schemes forming the embedded pair
have SSP coefficients $\sspcoeff$ and $\hat{\sspcoeff}$, we find that the flux-based SPERK scheme is
positivity-preserving under the time step-size restriction
\begin{equation}
\Delta t \le \frac{1}{2\nu} \min(\sspcoeff,\hat{\sspcoeff}) \Delta x^2. 
\end{equation}

The proof for equation-based partitioning follows in a similar manner.  For completeness we state the result below.
\begin{theorem}
Suppose that the semi-discretization (\ref{ode}) 
of the one-dimensional conservation law~(\ref{conslaw}) satisfies the positivity
property
\[
W + \Dt G(W) \ge 0
\]
for all $W\ge0$ and $\Dt \le \Dt_{FE}$ where the inequalities are taken component-wise.
Further, suppose that
an equation-based SPERK scheme is applied to (\ref{ode}), and
that the schemes composing the embedded pair have SSP coefficients
$\sspcoeff$ and $\hat{\sspcoeff}$.  Then the full discretization is 
positivity-preserving for time steps satisfying
\begin{equation*}
0\le \Dt \le \min(\sspcoeff,\hat{\sspcoeff}) \Dt_{FE}.
\end{equation*}
\end{theorem}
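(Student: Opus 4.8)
The plan is to exploit the structural feature that distinguishes equation-based from flux-based partitioning: here each equation of the ODE system is left intact, so the update at every node is literally a convex combination of the two underlying Runge--Kutta updates. First I would rewrite \eqref{EPRK-b} by using $\chi_i^n + (1-\chi_i^n)=1$ to distribute the $u_i^n$ term, obtaining
\[
u_i^{n+1} = \chi_i^n\Big(u_i^n + \Dt\sum_{j=1}^s b_j\, g_i(\s{j})\Big) + (1-\chi_i^n)\Big(u_i^n + \Dt\sum_{j=1}^s \hat{b}_j\, g_i(\s{j})\Big).
\]
The two bracketed quantities are precisely the $i$th components of the full Runge--Kutta updates produced by the schemes $(A,b)$ and $(A,\hat{b})$, each evaluated on the \emph{common} stage values $\s{j}$ from \eqref{RK-stages}.

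Next I would apply Lemma~\ref{thm:cfl_pos} to each component scheme separately. The hypothesis $W + \Dt G(W)\ge 0$ for $W\ge 0$, $\Dt\le\Dt_{FE}$ is exactly the forward-Euler positivity assumption required by that lemma, with $\Dt_{FE}$ in the role of the forward-Euler step limit. Hence scheme $(A,b)$ preserves positivity for $\Dt\le\sspcoeff\,\Dt_{FE}$ and scheme $(A,\hat{b})$ for $\Dt\le\hat{\sspcoeff}\,\Dt_{FE}$. Restricting to $\Dt\le\min(\sspcoeff,\hat{\sspcoeff})\,\Dt_{FE}$ therefore makes both bracketed quantities nonnegative whenever $U^n\ge 0$. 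Since $0\le\chi_i^n\le 1$, the value $u_i^{n+1}$ is a convex combination of two nonnegative reals and is itself nonnegative, which is the claim.

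I do not expect a genuine obstacle here; the argument is in fact simpler than the flux-based proof because the operator $G$ is applied as a whole and is never split, so there is no need to introduce the quantities $\gamma_p,\gamma_m$ or to build a combined step limit $\Dt_*$. The one point that warrants a sentence of care is the shared-stage observation: because the embedded pair uses a common matrix $A$, the intermediate stages $\s{1},\ldots,\s{s}$ coincide for the two schemes, so Lemma~\ref{thm:cfl_pos} may be invoked for each scheme on the same stage vectors. It is worth noting explicitly that the stage-positivity portion of each scheme's optimal Shu--Osher form is governed by the shared $A$, and hence controlled by $\min(\sspcoeff,\hat{\sspcoeff})$, so the single step restriction \eqref{eq:positivity} suffices to guarantee nonnegativity of both updates simultaneously.
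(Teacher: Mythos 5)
Your proof is correct and is essentially what the paper intends: the paper omits the detailed argument for this theorem, saying only that it ``follows in a similar manner'' to the flux-based case, and your convex-combination decomposition $u_i^{n+1} = \chi_i^n(\cdot) + (1-\chi_i^n)(\cdot)$ together with Lemma~\ref{thm:cfl_pos} applied to each component scheme on the shared stages is exactly that adaptation. You are also right about why the equation-based case is simpler --- each nodal update is a complete Runge--Kutta step of one scheme or the other (possible only because the pair shares $A$), so no splitting of $u_i^n$ via $\gamma_p$, $\gamma_m$ and no combined limit $\Dt_*$ are needed.
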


\section{Generalizations} \label{sec:generalizations}
In this section we provide some useful generalizations of our previous
spatial partitioning methods.  For notational simplicity we state these results
for partitions that are invariant in time; however, it is also
possible to vary the partitioning function $\chi$ at each time
level $n$.

\subsection{More than two methods}
Both  equation- and flux-based partitioning can be generalized to more than two methods.
Accuracy and positivity
results essentially identical to those in Sections~\ref{sec:accuracy} and \ref{sec:positivity} 
can be shown for  both cases.

\subsubsection{Equation-based partitioning}
Let $(\Iint_1, \Iint_2,\dots \Iint_r)$ be
a partitioning of the integers from 1 to $N$ (i.e., the partitioning satisfies
$\cup_{k=1}^r \Iint_k=\{1,2,\dots,N\}$,
and $\Iint_j \cap \Iint_k = \emptyset$ if $j \neq k$).  
The generalization to $r$ embedded methods defined by the coefficients
$A, b^{(1)},\dots,b^{(r)}$ is defined as
\begin{subequations} \label{generalEPRK}
\begin{align}
\s{j}     & = U^n + \Dt \sum_{k=1}^s a_{jk} G(\s{k}), ~~1\le j \le s, \nonumber \\
U^{n+1} & = U^n + \Dt \sum_{k=1}^r \sum_{j=1}^s b_{j}^{(k)} \diag({\chi^{(k)}}) G(\s{j}), \nonumber
\end{align}
\end{subequations}
where 
\begin{align}
\chi^{(k)}_{i} = \begin{cases}
1 & \text{if } i \in \Iint_k, \nonumber \\
0 & \text{otherwise.}
\end{cases}
\end{align}

\subsubsection{Flux-based partitioning}
Flux-based partitioning can also be written more generally in terms of 
$b^{(1)},\dots,b^{(r)}$.  In this case we take
\begin{subequations}\label{eq:generalARK}
\begin{align}
\s{j}     & = U^n - \frac{\Dt}{\Dx} \sum_{k=1}^s a_{jk} \m{D} \Phi(\s{k}), 1\le j \le s, \nonumber \\
U^{n+1} & = U^n - \frac{\Dt}{\Dx} \sum_{k=1}^r \sum_{j=1}^s b_{j}^{(k)} \m{D} \diag({\chi^{(k)}}) \Phi(\s{j}), \nonumber
\end{align}
\end{subequations}
to combine the $r$ different embedded schemes.

\subsection{Blending and accuracy}
\label{sec:blending}
Rather than shifting discontinuously between methods from one spatial point to 
the next, an appealing approach is to have a transition region in which a weighted
average of the two methods is used, with the weight shifting from one method to
the other over the transition region.
The following proposition shows that this approach is still accurate
in time, which is a necessary condition for the full SPERK discretization
to be accurate.

\begin{proposition} \label{orderprop}
Let $(A,b^{(1)}), (A,b^{(2)}), \dots, (A,b^{(r)})$ denote a set of embedded RK methods, and let
$p_k$ denote the order of accuracy of method $(A,b^{(k)})$.  Then the RK method
$$\left(A, \sum_{k=1}^r \alpha_k b^{(k)} \right) \mbox{ where } \sum_{k=1}^r \alpha_k = 1$$
has order of accuracy at least $p=\min_k p_k$.
\end{proposition}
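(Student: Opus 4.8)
The plan is to exploit the fact that all $r$ methods share the same coefficient matrix $A$, so a single step produces one common set of stage values $\s{j}, 1 \le j \le s$, computed from \eqref{RK-stages}, and the methods differ only in how these stages are weighted in the final update. First I would record the one-step behaviour of each component: writing $U(t)$ for the exact solution of \eqref{ode} passing through the step's starting value and denoting by $U^{n+1}_{(k)}$ the result of one step of $(A, b^{(k)})$, the order-$p_k$ assumption gives the local error bound $U^{n+1}_{(k)} = U(t_{n+1}) + \Oop(\Dt^{p_k+1})$, exactly the kind of truncation relation used in \eqref{trunc}.

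The key structural observation is that the update $U^n + \Dt \sum_j b_j G(\s{j})$ is affine in the weight vector $b$, while the stages $\s{j}$ do not depend on $b$ at all. Hence the blended method, which uses the single weight vector $\sum_k \alpha_k b^{(k)}$, produces exactly the corresponding affine combination of the individual updates:
\begin{align*}
U^{n+1} = U^n + \Dt \sum_{j=1}^s \Big(\sum_{k=1}^r \alpha_k b^{(k)}_j\Big) G(\s{j}) = \sum_{k=1}^r \alpha_k\, U^{n+1}_{(k)},
\end{align*}
where the second equality uses $\sum_k \alpha_k = 1$ to recombine the $U^n$ term.

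Combining the two observations, I would substitute the per-method error bounds into this identity. Because the weights sum to one, the leading terms collapse to a single copy of the exact solution, leaving
\begin{align*}
U^{n+1} = U(t_{n+1}) + \sum_{k=1}^r \alpha_k\, \Oop(\Dt^{p_k+1}) = U(t_{n+1}) + \Oop\big(\Dt^{\,\min_k p_k + 1}\big),
\end{align*}
since each $\alpha_k$ is a fixed constant independent of $\Dt$ and $\Dt^{p_k+1} = \Oop(\Dt^{\min_k p_k + 1})$ for every $k$. This is a local error of order $\min_k p_k + 1$, which by the standard equivalence for one-step methods gives the blended method order of accuracy $p = \min_k p_k$, as claimed.

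I do not expect a substantial obstacle: the argument is the same affine-combination mechanism that drives Theorem~\ref{thm:mixtimeOrder}, with the pointwise mask values $\chi_i, 1-\chi_i$ replaced by the global blending weights $\alpha_k$. The only point worth flagging is that the $\alpha_k$ need not be nonnegative --- only their sum is fixed at one --- but this is harmless, since the big-$\Oop$ estimate is insensitive to constant signed prefactors. As elsewhere in the paper, this is purely a statement about time accuracy of the blended Runge--Kutta method applied to \eqref{ode}; any interaction with the spatial discretization error is a separate issue outside the scope of the proposition.
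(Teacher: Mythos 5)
Your proof is correct, but it takes a genuinely different route from the paper's. The paper's own proof is purely algebraic and just two sentences long: every Runge--Kutta order condition has the form $b^T\Psi(\tau) = 1/\gamma(\tau)$, where the elementary weight $\Psi(\tau)$ depends only on $A$; since these conditions are \emph{linear} in the weight vector and each $b^{(k)}$ satisfies them for all trees of order up to $p=\min_k p_k$, the combination $\sum_k \alpha_k b^{(k)}$ with $\sum_k \alpha_k = 1$ satisfies them as well. Your argument instead works at the level of the one-step map: because all methods share $A$, the stages $\s{j}$ are common, the update is affine in $b$, so the blended update equals $\sum_k \alpha_k U^{n+1}_{(k)}$ exactly, and substituting the per-method truncation bounds yields a local error $\Oop(\Dt^{\,p+1})$. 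Both proofs hinge on the same two facts---shared $A$ and weights summing to one---and both are insensitive to the sign of the $\alpha_k$, which you correctly flag. What the paper's argument buys is brevity and independence from any particular ODE: it certifies the blended tableau as an order-$p$ method in the classical order-condition sense, with no appeal to truncation errors or to the local-to-global error conversion you invoke at the end. What your argument buys is that it exposes the mechanism reused elsewhere in the paper: as you note, it is precisely the affine-combination device behind Theorem~\ref{thm:mixtimeOrder}, with the global weights $\alpha_k$ playing the role of the pointwise mask values $\chi_i$, $1-\chi_i$; it also makes transparent exactly where the embedded (shared-$A$) hypothesis enters---with different coefficient matrices the stages differ, the identity $U^{n+1} = \sum_k \alpha_k U^{n+1}_{(k)}$ fails, and indeed the paper's remark following the proposition warns that such averages are in general only first-order accurate.
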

\begin{proof}
The order conditions involve only expressions that are linear in the weights.
Since all of the component method coefficients satisfy the conditions up to
order $p=\min_k p_k$, the averaged method does as well.
\end{proof}
\begin{remark}
The proposition applies only to embedded methods.
If the component methods had different coefficient matrices $A$, their average
would in general be only first-order accurate, even if they share common abscissas $c$.
\end{remark}

Theorems~\ref{thm:mixtimeOrder} and \ref{thm:mixtimeOrder:flux-based} for equation- and flux-based  SPERK methods
can be extended to the blended case in this way.  
In practice, we observe the full accuracy when blending using either
approach.

\section{Example: Advection-diffusion} \label{sec:advectiondiffusion}
The semi-discretization of certain time-dependent PDEs leads to systems of ODEs with eigenvalues
near the negative real axis, whereas the semi-discretization of others leads
to systems with eigenvalues near the imaginary axis.  
Appropriate time integrators must include the corresponding portions
of the complex plane in their absolute stability regions.
In problems with strongly varying coefficients or mesh spacing
the relevant portion of the complex plane may vary spatially.  This
may complicate the selection of an appropriate time integrator.

Consider, for example, the nonlinear advection-diffusion equation
\begin{align} \label{adv-diff}
u_t + (b(x) u)_x = (a(x) (u^2)_x)_x,
\end{align}
with periodic boundary conditions on $[0,1]$ and initial conditions 
\[
u(x, 0) = \tfrac{1}{10} \sin^3(2 \pi x) + 2.
\]
Discretizing the spatial derivatives in \eqref{adv-diff} with three-point
centered differences yields the flux-differencing method \eqref{eq:fluxdiff}
with
\[
f_{i+\frac{1}{2}} = b(x_{i+\frac{1}{2}})\left(\frac{u_{i+1}+u_{i}}{2}\right) - a(x_{i+\frac{1}{2}})  \frac{1}{\Dx} \left(u^2_{i+1}-u^2_{i}\right)
\]
and a constant mesh spacing $\Dx$.
We take $a(x)>0$ and $b(x)$ to be functions that are periodically defined by
\begin{align*}
a(x) &=  \tfrac{1}{1000} + \tfrac{1}{10000} (\cos(2\pi x-\tfrac{1}{2}\pi)+1)^{10}, \\
b(x) &=  1+\tfrac{1}{10}(\cos(2\pi x-\tfrac{3}{2}\pi)+1)^{10}.
\end{align*}
As shown in Fig.~\ref{fig:ab}, there
is a region centered at $x=0.25$ where diffusion dominates and
negative real axis inclusion is critical for the design of the time-stepping scheme.
Similarly, in the region around $x=0.75$, convection dominates and imaginary axis inclusion is
the most crucial design feature.
The strong spatial variation in the dominant term makes this a good problem 
for testing the performance of SPERK schemes.  

\begin{remark}  This nonlinear advection-diffusion problem serves as a prototype
where the relative importance of real and imaginary eigenvalues varies spatially.  
We now develop explicit time-stepping schemes suitable for such problems.
Explicit schemes have the advantage of being  easy to implement and
do not require the solution of nonlinear systems or the inversion of matrix systems.
We remark that for the case of advection-diffusion, 
implicit-explicit (IMEX) time-stepping schemes might also be considered.
IMEX schemes apply an explicit scheme to nonstiff terms and an implicit scheme to
stiff terms, and are particularly effective when a linear, symmetric diffusion term arises.
See, e.g., \cite{Ascher/Ruuth/Wetton:95imex} for further details.
\end{remark}
 
\begin{figure}
  \centerline{\includegraphics[height=20ex]{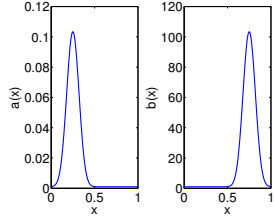}}
  \caption{The variable coefficients $a(x)$ (left) and $b(x)$ (right).   We observe a spatial variation in the relative importance of
advection and diffusion.}
  \label{fig:ab}
\end{figure}

\subsection{Second-order embedded pairs}
We now design three- and four-stage embedded explicit Runge--Kutta pairs
suitable for this problem.
In each embedded pair, one method has a stability polynomial that maximizes (or nearly maximizes)
the real axis interval of absolute stability, while the other method has a stability
polynomial that maximizes (or nearly maximizes) the imaginary axis interval of
absolute stability.
Given a coefficient matrix $A$, the stability polynomial of a Runge--Kutta method
(and hence its linear stability properties) can be chosen by solving a linear system
of equations for the weights $b_j$.
We do not consider two-stage pairs since all second-order two-stage schemes
have the same stability polynomial.

The stability polynomial of a three-stage, second-order RK method has the form
$R(z) = 1 + z + z^2/2 + \alpha_3 z^3$ where $\alpha_3 = b^T A c$.
For real-axis stability, we take the 3-stage Runge--Kutta--Chebyshev method of the
family presented in \cite{verwer1996}, which we refer to as RKC(3,2), as the second
scheme of our embedded pair.  The stability
polynomial of this method is a Bakker--Chebyshev polynomial and
contains nearly the largest possible portion of the negative real axis over the class
of three-stage second-order schemes.
The first scheme is obtained by setting $\alpha_3=1/4$, which approximately
maximizes imaginary axis inclusion, and selecting weights $\hat{b}_j$ corresponding
to the resulting polynomial.  This gives the following embedded pair:
\begin{equation}
  \begin{footnotesize}
  \begin{array}{c|ccc}
    0           &         &         &           \\
    3/8      &  3/8 &         &           \\
    3/8      &  3/16 &  3/16 &          \\
    \hline
    \rule{0pt}{1.05em}
    \hat{b}^T     &  -1/3 &  -20/9 &  32/9   \\
    b^{T}       &  -1/3    &  4/9    &   8/9
  \end{array}
  \end{footnotesize}
  \label{eq:RKC32}
\end{equation}
The absolute stability regions are shown in Fig.~\ref{fig:linear-stability} (left).

The stability polynomial of a four-stage, second-order RK method has the form
$R(z) = 1 + z + z^2/2 + \alpha_3 z^3 + \alpha_4 z^4$ where $\alpha_3 = b^T A c$
and $\alpha_4 = b^T A^2 c$.
The optimal imaginary axis inclusion occurs for the choice $\alpha_3=1/6$, $\alpha_4=1/24$,
which gives the stability polynomial of the
classical fourth-order Runge--Kutta method.  Hence we take the classical method,
which we shall denote by RK4, as the first
scheme of our pair.
Sufficient degrees of freedom remain to obtain a scheme with the same linear stability as
the optimal four-stage, second-order scheme RKC(4,2).  This gives the following embedded pair (with absolute stability regions as shown in Fig.~\ref{fig:linear-stability} right)
\begin{equation}
  \begin{footnotesize}
  \begin{array}{c|cccc}
    0           &         &         &  &         \\
    1/2      &  1/2 &         &     &      \\
    1/2      &  0   &  1/2 &       &   \\
    1      &   0  & 0  &  1     &   \\
    \hline
    \rule{0pt}{1.05em}\hat{b}^T     &  1/6 &  1/3 & 1/3 &  1/6  \\
    b^{T}       &  2/125    &  17/25  & 36/125  &   2/125
  \end{array}
  \end{footnotesize}
 \label{eq:RK4}
\end{equation}

\begin{figure}
  \centerline{%
    \includegraphics[width=0.4\textwidth]{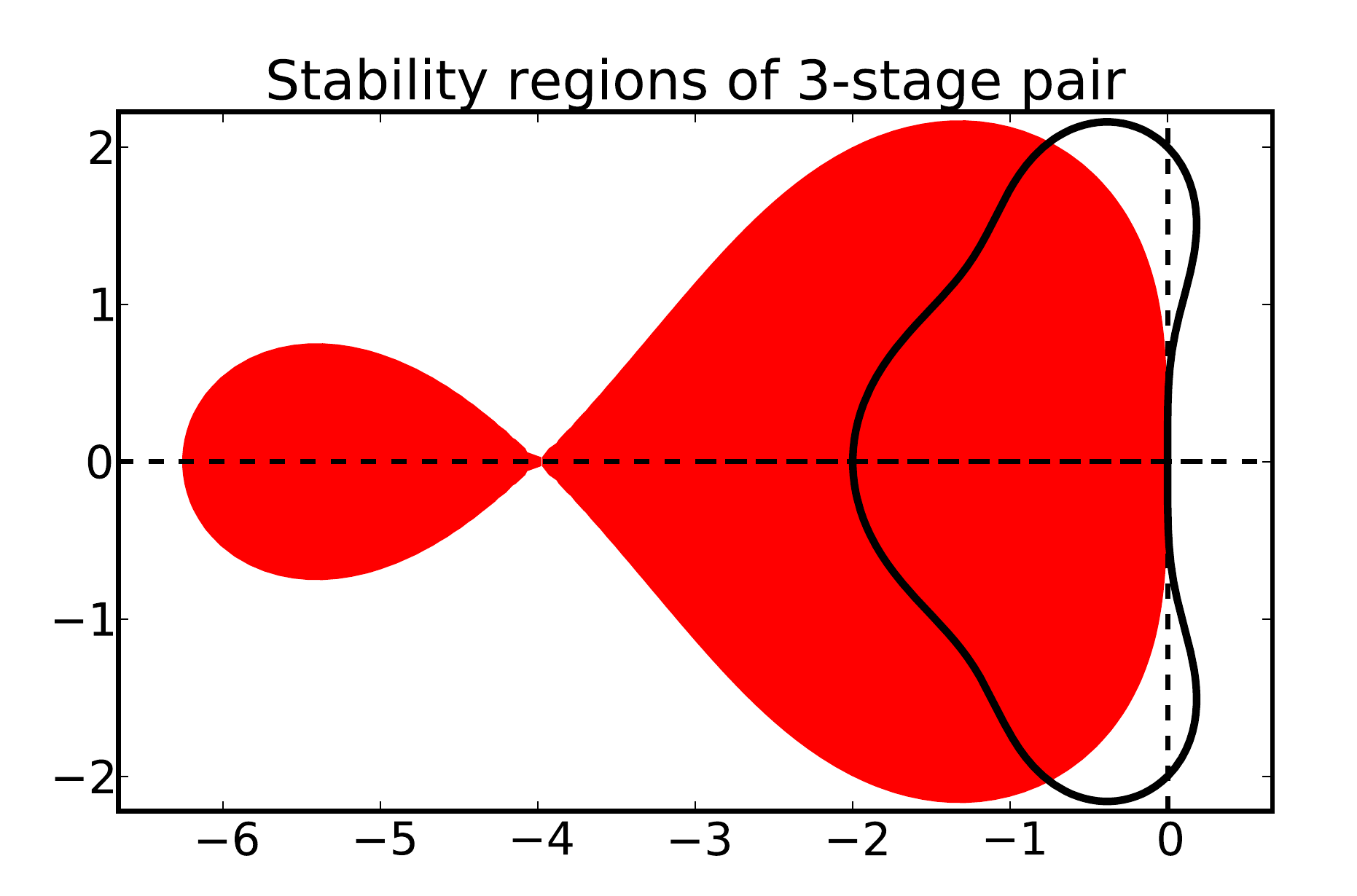}
    \includegraphics[width=0.45\textwidth]{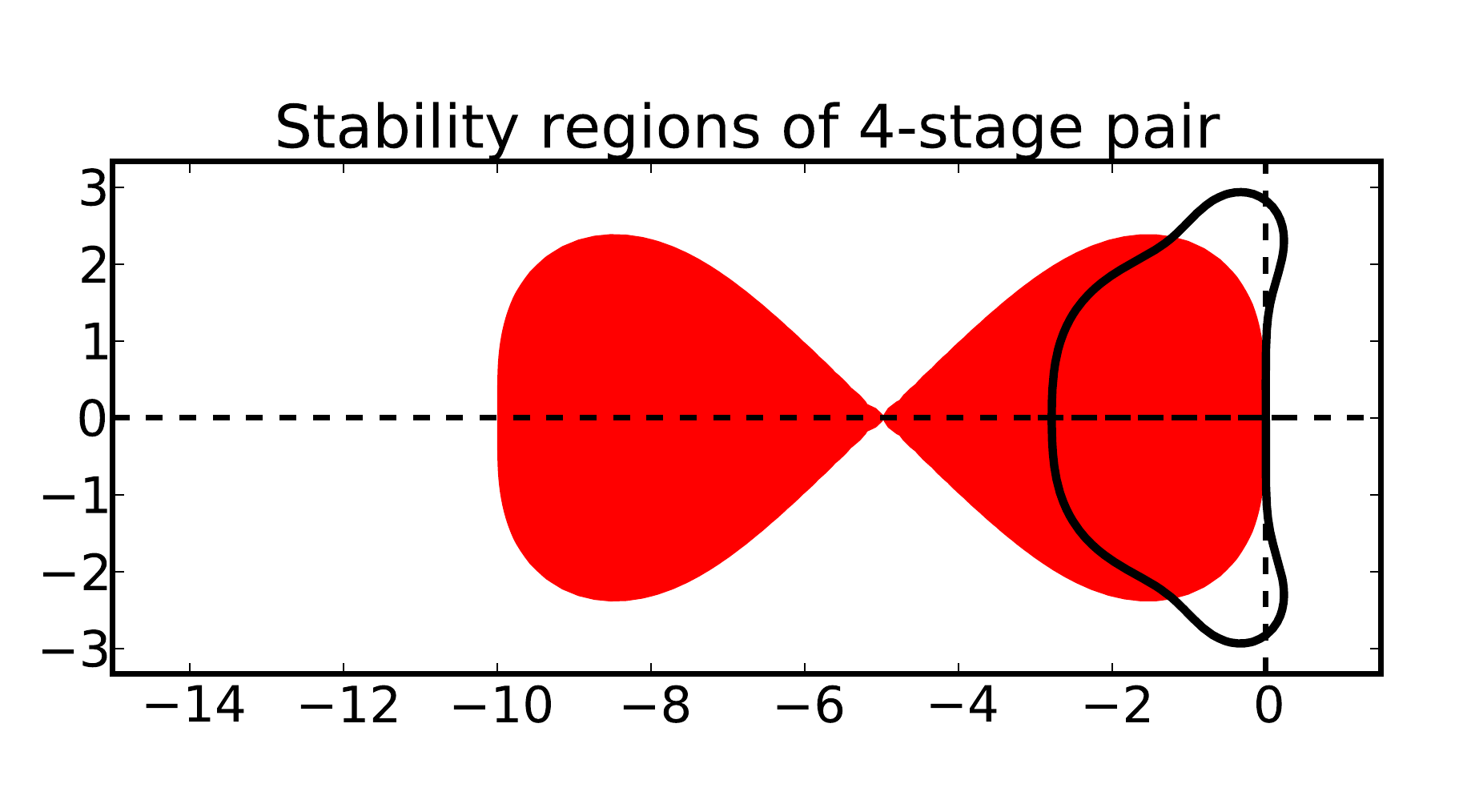}
  }
  \caption{Absolute stability regions for the three-stage embedded
    pair \eqref{eq:RKC32} (left) and the four-stage embedded pair
    \eqref{eq:RK4} (right).  The shaded red region corresponds to the
    method optimized for real axis inclusion and the black line
    corresponds to the method optimized for imaginary axis inclusion.
  }
  \label{fig:linear-stability}
\end{figure}

\subsection{Numerical results}
We now give the results of numerical experiments carried out using
our three- and four-stage SPERK schemes.
The partitioning parameter $\chi$ is set equal to 1 wherever
$a(x)$ is more than 0.005; elsewhere it is zero.
More generally, $\chi$ could be selected based on the local Reynolds number.
In all cases, we vary the time step-size and compute max-norm absolute
errors by comparing to a highly accurate RK4 approximation
of the spatially discretized system at the final time $t=0.1$.
All computations use a constant mesh spacing of $\Dx = 1/250.$  

The results for three-stage methods are given in the left-hand plot of Fig.~\ref{fig:advection_diffusion_results}.
We find that the scheme that is designed for advection-dominated flows
gives good results for $\Dt\le \scinot{1.45}{-5}$.   Stability is lost for larger time step-sizes.
The second method, RKC(3,2),  is  unstable for  $\Dt\ge \scinot{1.93}{-5}$.  
The SPERK scheme derived from the combination is stable for values of $\Dt$ that
are more than two times greater.

The right-hand plot of  Fig.~\ref{fig:advection_diffusion_results} gives the results for
four-stage methods.  
The classical fourth-order Runge--Kutta method RK4
has a small  error but becomes unstable for time steps larger than $\scinot{2}{-5}$.
Our variant of RKC(4,2) has a similar time step restriction but produces a larger error (because it is second order).
The SPERK scheme gives improved stability and allows for time steps that are more than three times larger.
In this example, whenever RK4 is stable, the SPERK scheme gives the
same accuracy because the largest error occurs in the
convection-dominated region.

For both embedded pairs, if we repeat the experiments with flux-based
partitioning, we observe essentially the same errors.  This is because
the largest errors occur away from the switching interface.

\begin{figure}
  \centerline{%
    \includegraphics[width=0.45\textwidth]{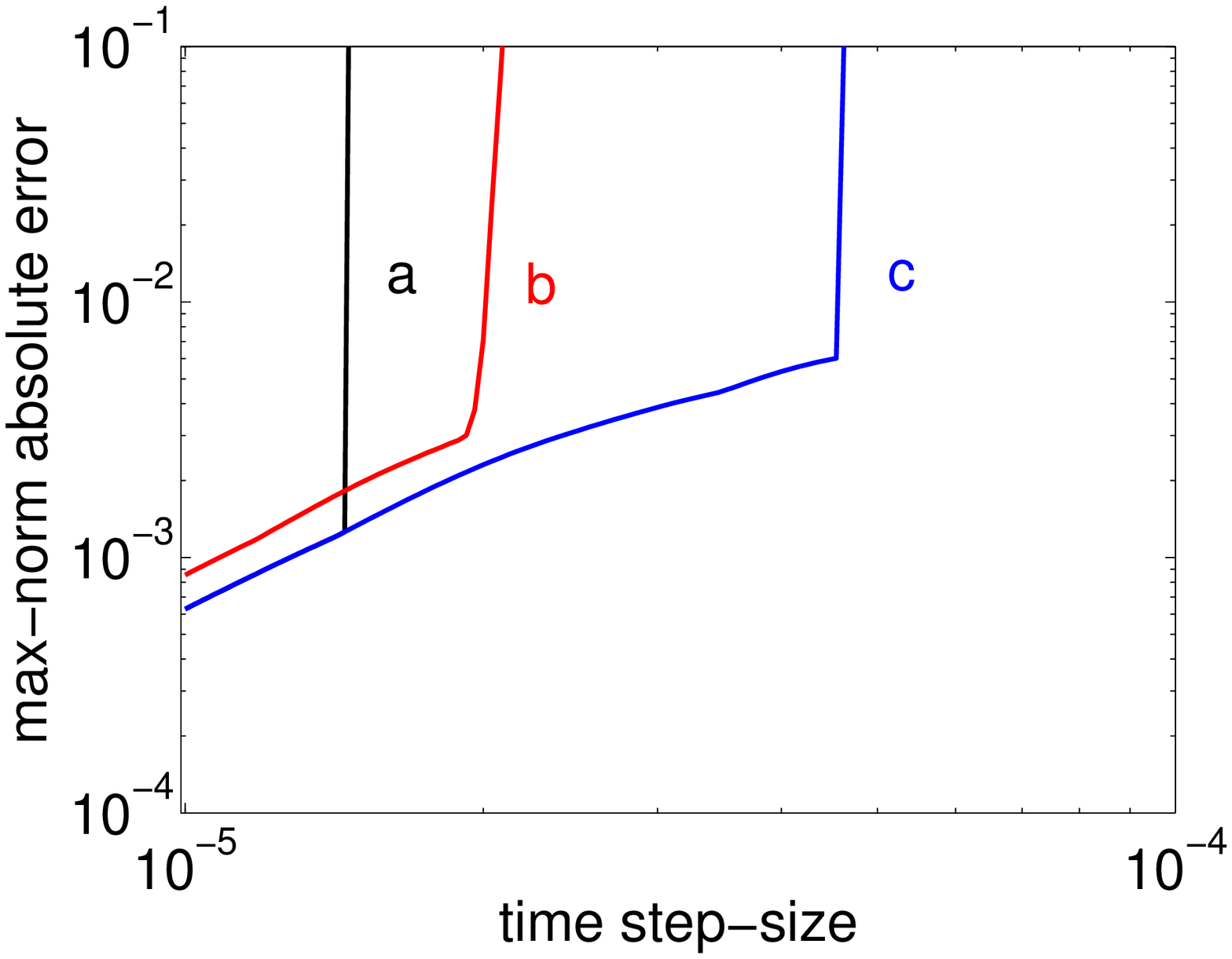}
    \qquad
    \includegraphics[width=0.45\textwidth]{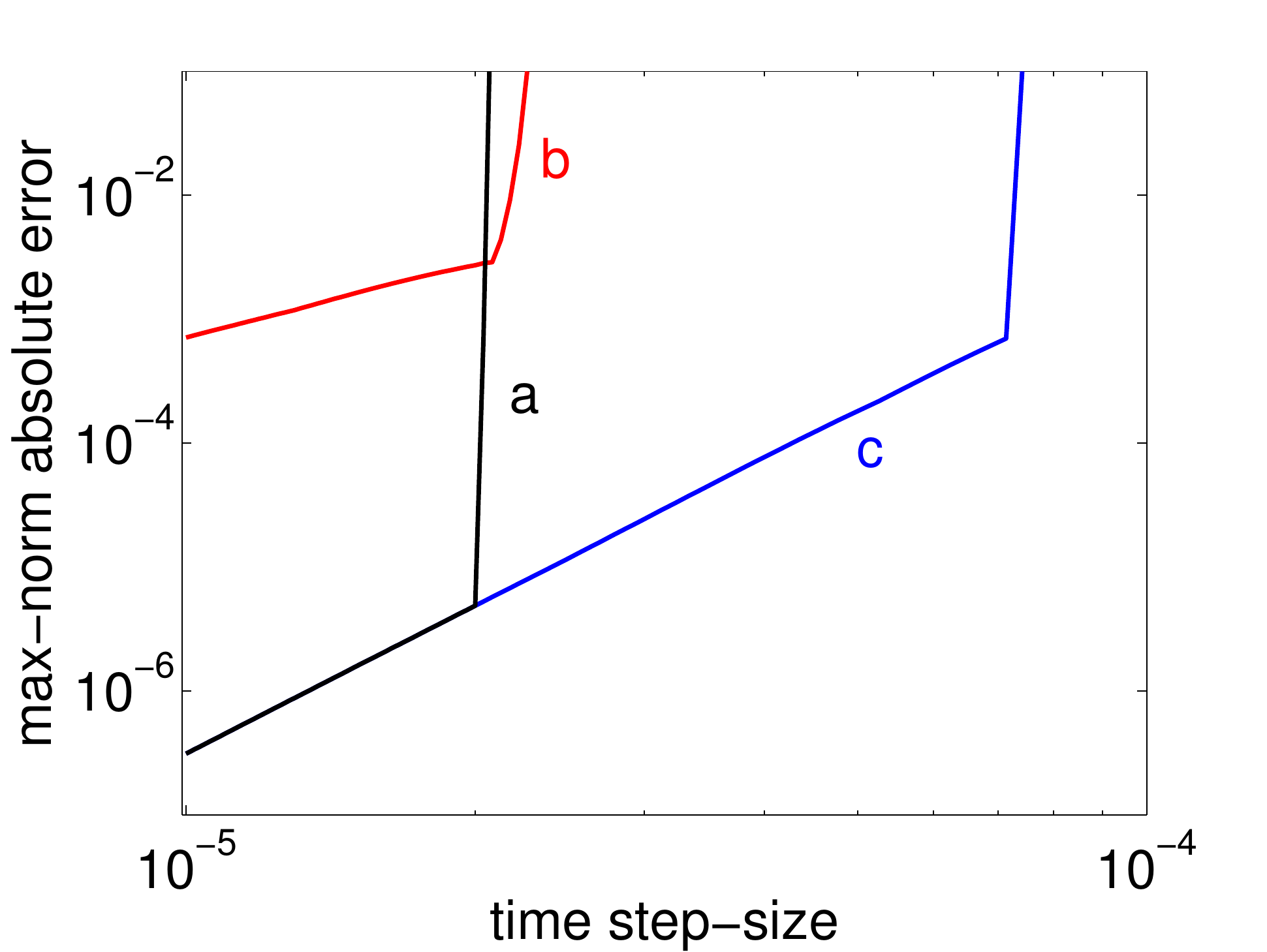}
  }
  \caption{Maximum norm absolute errors for SPERK schemes applied to the variable coefficient advection-diffusion problem.
The left plot gives results for three-stage schemes: (a) scheme with maximal imaginary axis inclusion, (b) RKC(3,2),
(c) equation-based partitioning.
The right plot gives results for four-stage schemes: (a) RK4, (b) our variant of RKC(4,2),
(c) equation-based partitioning.
  }
  \label{fig:advection_diffusion_results}
\end{figure}

\section{Example: Spatially partitioned time-stepping for WENO} \label{sec:WENO}

Weighted essentially non-oscillatory (WENO) spatial discretizations
involve an adaptive, data-dependent combination of several candidate
stencils to compute fluxes \cite{Shu:WENO_SiamReview}.
The most commonly used scheme provides fifth-order accuracy in smooth
regions of the solution and formally third-order spatial
discretizations near shocks or other discontinuities.
Assuming a positive flux function,
the finite difference WENO scheme computes the positive
flux differences in \eqref{eq:fluxdiff} with
\begin{align}
  f_{j+\hf} = \omega_0
  \left(\tfrac{2}{6} f_{j-2} -\tfrac{7}{6} f_{j-1} + \tfrac{11}{6} f_{j} \right)
  &+ \omega_1
  \left( - \tfrac{1}{6} f_{j-1} + \tfrac{5}{6} f_{j} + \tfrac{2}{6} f_{j+1} \right) \nonumber\\
  &+ \omega_2
  \left(\tfrac{2}{6} f_{j} + \tfrac{5}{6} f_{j+1} - \tfrac{1}{6} f_{j+2} \right), \label{eq:f+WENO}
\end{align}
where the weights $\omega_0$, $\omega_1$, and $\omega_2$ are
chosen by computing data-dependent smoothness indicators
\cite{Shu:WENO_SiamReview}.
In smooth regions, $\omega_{0,1,2}$ will be close to $\frac{1}{10}$,
$\frac{6}{10}$ and $\frac{3}{10}$ respectively whereas near nonsmooth
spatial features (such as a discontinuity), they adopt a binary choice
biasing the stencil away from the discontinuity
\cite{Shu:WENO_SiamReview}.

\subsection{An embedded pair for WENO}
We start with the SSPRK(5,3) scheme \cite{Spiteri/Ruuth:newclass}.
This will be the lower-order scheme which is used in spatial regions where the solution is not
smooth.
If we embed this method in a larger 7-stage, 5th-order method, we have the following
Butcher tableau
\begin{equation}
  \begin{scriptsize}
  \begin{array}{c|ccccccc}
    0           &         &         &         &         &         &         &  \\
    0.3773      &  0.3773 &         &         &         &         &         &  \\
    0.7545      &  0.3773 &  0.3773 &         &         &         &         &  \\
    0.7290      &  0.2430 &  0.2430 &  0.2430 &         &         &         &  \\
    0.6992      &  0.1536 &  0.1536 &  0.1536 &  0.2385 &         &         &  \\
    c_6         &  a_{61}  &  a_{62} &  a_{63}  &  a_{64} & a_{65}  &         &  \\
    c_7         &  a_{71}  &  a_{72} &  a_{73}  &  a_{74} & a_{75}  & a_{76}  &   \\
    \hline
    \rule{0pt}{1.05em}\hat{b}^T     &  0.2067 &  0.2067 &  0.1171 &  0.1818 &  0.2876 &       0 &    0 \\
    b^{T}       &  b_1    &  b_2    &   b_3   &   b_4   &   b_5   & b_6     & b_7
  \end{array} \label{eq:rk75ssp53_unknown_coeffs2}
  \end{scriptsize}
\end{equation}
where we display only a few digits of the SSPRK(5,3) coefficients.
The unknown coefficients will determine an RK(7,5) scheme that will be
optimized for linear stability and used in spatial regions where the
solution is smooth.

A technique for determining these coefficients by satisfying the order
conditions
is explored in \cite{cbm:msc} following a strategy designed in
\cite{Verner:1978}.
We attempt to maximize the linear stability properties of the RK(7,5)
scheme; these are determined by the stability polynomial which in this
case is parameterized by the coefficients of the $z^6$ and $z^7$
terms \cite{Hairer:ODEs2:ed2}, polynomial expressions in the
coefficients \eqref{eq:rk75ssp53_unknown_coeffs2} which we denote by
$\alpha_6$ and $\alpha_7$ respectively.
We find that specifying the SSPRK(5,3)
coefficients restricts the possible solutions to a straight line
through the $\alpha_6$--$\alpha_7$ space.
By examining the linear stability properties along this line (see
Fig.~\ref{fig:rk75_rho_rho2_rho3_ossp53_restrict}), we can choose one
of the degrees of freedom (the ``homogeneous polynomial'' $I_{65}$,
see \cite{cbm:msc} for details) to maximize the linear stability
properties of the resulting scheme.
\begin{figure}
  \centerline{%
    \includegraphics[height=24ex]{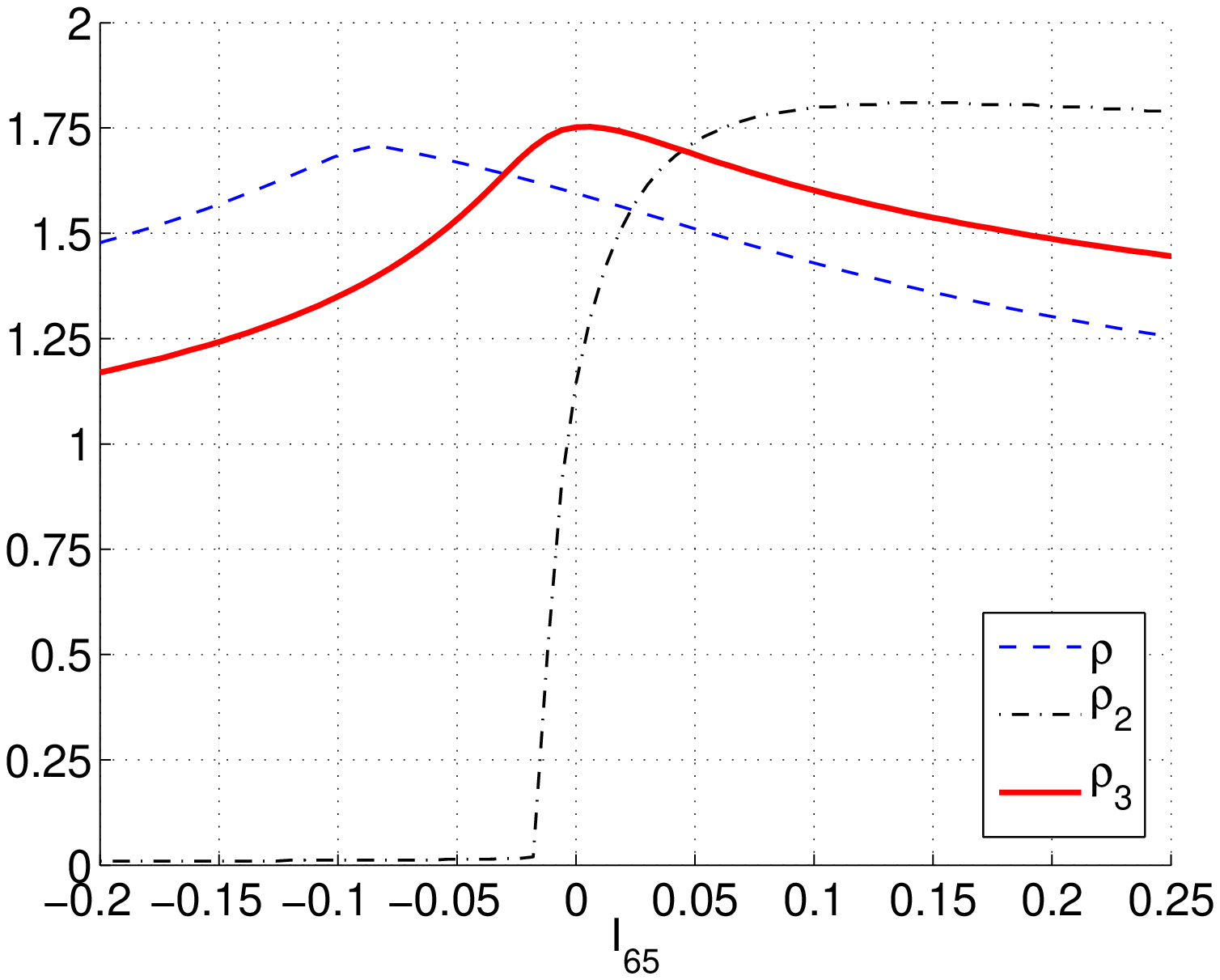}
    \qquad
    \includegraphics[height=24ex]{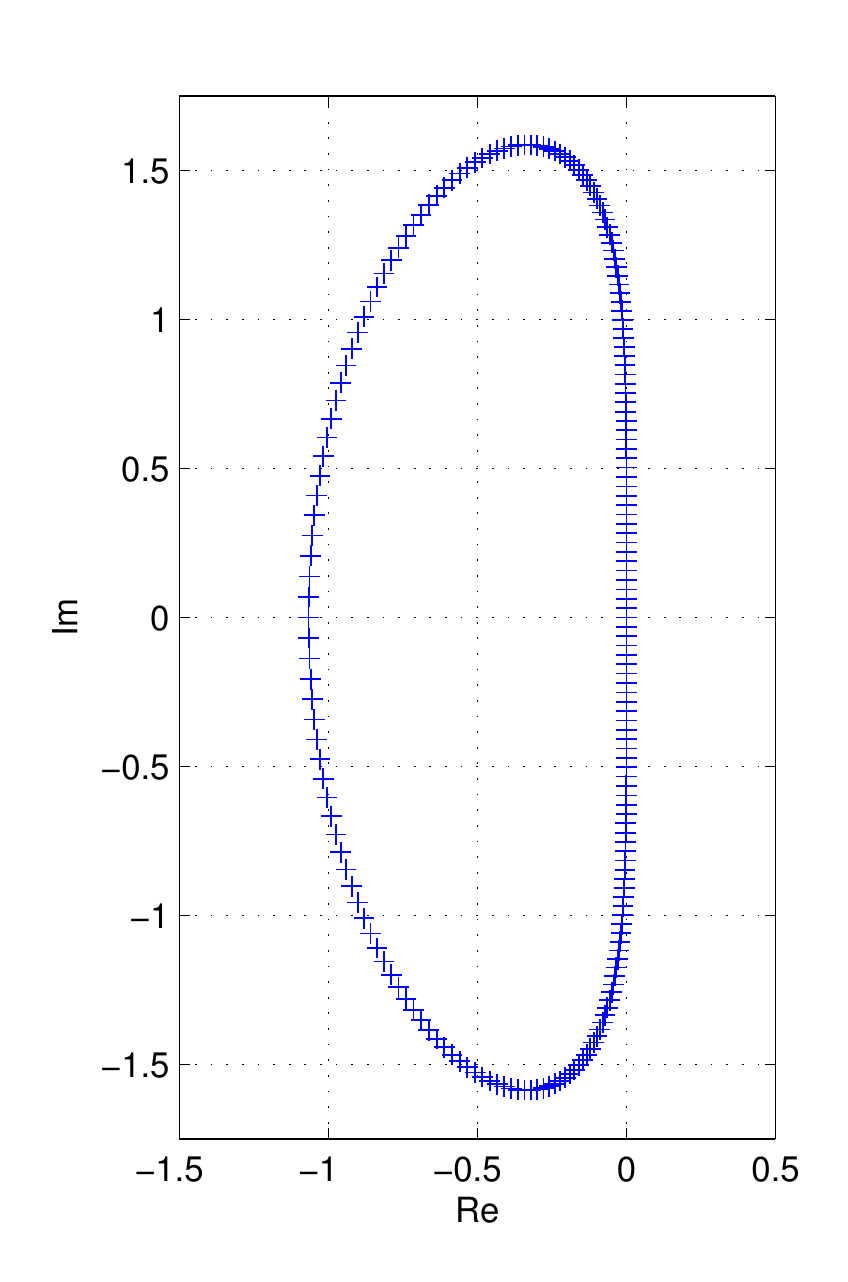}
  }
  \vspace*{-1.5ex}  % HACK
  \caption{Choosing the linear stability properties for RK(7,5)
    scheme.  Left: various measurements of the linear
    stability region versus a parameter $I_{65}$.
    Here $\rho$ is the radius of
    the largest inscribed disc and $\rho_2$ is the radius
    of the largest interval of the imaginary axis included.
    Right: the ``WENO bean'', the spectrum of the
    linearized WENO operator in smooth regions \cite{Jiang/Shu:1996,
      WangSpiteri:weno5LinearInstab, mohammad:weno5stab}:  $\rho_3$
    measures the largest scaling of this bean that will fit in the
    linear stability region.
    We choose $I_{65}=0.0037$ and find $\rho_3 \approx 1.76$,
    $\rho \approx 1.58$, and $\rho_2 \approx 1.2$.
  }
  \label{fig:rk75_rho_rho2_rho3_ossp53_restrict}
\end{figure}
There are still six (nonlinear) order conditions to satisfy and six
coefficients to be determined.
Solving these remaining equations with a computer algebra system
results in a single discrete solution and a one-parameter family of
solutions.
The one-parameter family was discarded because each member had either
a negative node $c_6$ or unreasonably large coefficients.
The lone solution did not have these deficiencies and we use it for our numerical
experiments.
The coefficients are shown to 15 digits in Table~\ref{tab:rk75ssp53_scheme}.

\begin{table}
  \caption{Coefficients of the embedded RK(7,5)/SSPRK(5,3) method.}
  \label{tab:rk75ssp53_scheme}
  \vspace*{-1.5ex}  % HACK
  \centering
  \begin{scriptsize}
    \begin{tabular}{lll}
      \hline
      $a_{21} = 0.377268915331368$, \\
      $a_{31} = 0.377268915331368$, &
      $a_{32} = 0.377268915331368$, \\
      $a_{41} = 0.242995220537396$, &
      $a_{42} = 0.242995220537396$, &
      $a_{43} = 0.242995220537396$, \\
      $a_{51} = 0.153589067695126$, &
      $a_{52} = 0.153589067695126$, &
      $a_{53} = 0.153589067695126$, \\
      $a_{54} = 0.23845893284629$, &
      $\hat{b}_1 = 0.206734020864804$, &
      $\hat{b}_2 = 0.206734020864804$, \\
      $\hat{b}_3 = 0.117097251841844$, &
      $\hat{b}_4 = 0.18180256012014$, &
      $\hat{b}_5 = 0.287632146308408$, \\
      \hline
      $a_{61} = 0.113015751552667$, &
      $a_{62} = 1.49947221487533$, &
      $a_{63} = 0.134753400626063$, \\
      $a_{64} = -1.06421259296782$, &
      $a_{65} = 0.205145170072233$, &
      $a_{71} = -0.512110930783855$, \\
      $a_{72} = 3.91735780781337$, &
      $a_{73} = -0.0470520461913835$, &
      $a_{74} = -0.218621292015928$, \\
      $a_{75} = -1.64543995945252$, &
      $a_{76} = -0.494133579369683$, &
      $b_1 = 0.122097569374901$, \\
      $b_2 = 0.492898173466563$, &
      $b_3 = -0.232023614650883$, &
      $b_4 = -1.98394581022939$,\\
      $b_5 = 1.85394392181784$, &
      $b_6 = 0.965538124667539$, &
      $b_7 = -0.21850836444657$.\\
    \hline
    \end{tabular}
  \end{scriptsize}
\end{table}

\subsection{Mask selection}  \label{sec:switching}
Various approaches to spatially switching between different schemes
have been proposed in the context of nonlinear hyperbolic PDEs;
see e.g.,~\cite{harabetian1993nonconservative} and references therein.
One particularly simple but effective approach used in~\cite{harabetian1993nonconservative}
is to consider the
second order differences of the solution:
\begin{align} \label{2diff-switch}
\chi_i & = \begin{cases}
1 & \text{if } |\Delta^2 q_i | < C \Dx^2 \\
0 & \text{if } |\Delta^2 q_i | \ge C \Dx^2.
\end{cases}
\end{align}
This approach sets $\chi=0$ (so the lower-order SSP scheme will be used)
wherever $\Delta^2 q$ is large.  In practice, setting $C=500$
seems to work well for the problems investigated below.

Alternatively, one may conveniently use the WENO weights $\omega_{0,1,2}$ to
select the mask $\chi$ in a WENO-based SPERK scheme.
One possible choice is
\begin{align} \label{weno-switch}
\chi_i &= \begin{cases}
1 & \text{if $|\omega_0 - \tfrac{1}{10}| \leq 0.06$
         and $|\omega_1 - \tfrac{6}{10}| \leq 0.06$
         and $|\omega_2 - \tfrac{3}{10}| \leq 0.06$,}  \\
0 & \text{otherwise,}
\end{cases}
\end{align}
where $0.06$ is an adjustable threshold parameter.
That is, if the WENO weights are within this small threshold of their
theoretical smooth-region values then we propagate the higher-order
linearly stable RK(7,5) solution, otherwise we propagate the
lower-order SSPRK(5,3) solution.
We compute the mask based on the weights corresponding to $U^n$, the solution at time $t_n$ and thus fix the mask over each time step.

With either approach, we then set
\begin{align} \label{expand_mask}
\chi_i = \min_{|j|\le 4} \chi_{i+j};
\end{align}
i.e., the mask is widened by four grid cells to increase the usage of the SSP scheme.
This was suggested in \cite{harabetian1993nonconservative} and is essential
to ensure that a shock does not leave the zone in which the SSP method is used during one time step.

\paragraph{Remark} There are many possibilities for choosing the mask
$\chi$; here we have suggested two simple approaches but this not
intended to be exhaustive, nor have we tried to optimize the 0.06
threshold parameter or the value of $C$.

\subsection{Burgers' equation}  \label{sec:burgers}
We perform convergence studies which demonstrate that
the SPERK methods achieve the predicted orders of accuracy.
Our tests use the inviscid Burgers' equation $u_t + (u^2)_x = 0$ on
the periodic domain $[-1, 1]$ with smooth initial conditions
\begin{equation} \label{eq:smoothICs}
  u(x,0) = \hf - \hf \cos \left(\pi \left(x - \frac{\sin 2\pi x}{4\pi}\right) \right).
\end{equation}
Table~\ref{tab:conv} shows that the methods achieve (at least) their
expected orders of accuracy.  Namely, five when the fifth-order scheme
is used ($\chi(x) = 1$ for all $x$) and three when using the
third-order scheme at any subset of points.
As noted in Section~\ref{sec:blending}, $\chi$ need not be a binary
choice: we see that the method maintains third-order even when random
values of $\chi$ (in the range $[0,1]$) are selected at each point
and at each time-step.
Note also that  the flux-partitioned scheme exhibits an order of
accuracy $\min(p,\hat{p}) = 3$, \emph{without} the loss of one order
predicted by our Theorem~\ref{thm:mixtimeOrder:flux-based}; this is
typical of what we observed in all our numerical tests.

\begin{table}
  \caption{Convergence studies for various mask functions
    $\chi(x)$, performed on
    Burgers' equation with smooth initial conditions
    discretized using WENO and the SPERK 5th-order/3rd-order pair.
    We compute to $\tfinal = 0.25$ using a CFL number of 1.2; error measured
    in an approximate $L_2$ norm.
  }
  \label{tab:conv}
  \centering
  \begin{small}
  \begin{tabular}{cccccc}
    \hline
    & & \multicolumn{4}{c}{error for different $\chi(x)$ functions} \\
    partitioning & $\Delta x$ & $1$ &
    $0$  &
    Heaviside$(x)$  &
    random(0,1) \\
    \hline
    equation-  & 1/320 & \rule{0pt}{1.05em}\scinot{1.29}{-7}  & \scinot{5.59}{-7} & \scinot{1.29}{-7} & \scinot{3.33}{-7} \\
    based    & 1/640 & \scinot{4.09}{-9}  & \scinot{7.00}{-8} & \scinot{4.44}{-9} & \scinot{3.76}{-8} \\
    &                1/1280 & \scinot{1.29}{-10} & \scinot{8.64}{-9} & \scinot{5.48}{-10} & \scinot{4.39}{-9} \\
    est.~order      &        & 4.99 & 3.02 &  3.45 & 3.08 \\
    \hline
    flux-based   & 1/320 & \rule{0pt}{1.05em}\scinot{2.01}{-8}  & \scinot{1.08}{-7} & \scinot{2.44}{-8}  & \scinot{6.63}{-8} \\
                 & 1/640 & \scinot{6.38}{-10} & \scinot{1.34}{-8} & \scinot{1.87}{-9}  & \scinot{8.06}{-9} \\
    &             1/1280 & \scinot{2.00}{-11} & \scinot{1.65}{-9} & \scinot{2.18}{-10} & \scinot{1.05}{-9} \\
    est.~order   &       & 4.99 & 3.02 & 3.44 & 2.99 \\
    \hline
  \end{tabular}
  \end{small}
\end{table}
In addition to these finite difference convergence studies,
we also tested the equation-based SPERK scheme with a
finite-volume WENO code \cite{pyclaw} and
observed similar results.

\subsubsection{Total variation tests}  \label{sec:tvd}
We consider the discrete total variation seminorm of the solution on
two test problems shown in Fig.~\ref{fig:burg_profile}.
The first consists of the inviscid Burgers' equation with a
square wave initial condition in $[0,1]$.
We determined the largest CFL number $\sigma$, $\dt = \sigma \dx$, such that
the resulting solution experiences no significant increase in the TV seminorm.
For the flux-partitioned scheme
we find in Fig.~\ref{fig:TVinc}
that the SSPRK(5,3) scheme
(i.e., $\chi = 0$ everywhere) is no longer TVD for $\sigma > 1.4$.
The TV error of the RK(7,5) solution slowly increases with $\sigma$
and, for example, exhibits a TV error larger than $10^{-4}$ for
$\sigma > 0.5$.
With the SPERK embedded pair (using $\chi$ as chosen by the WENO
weights
using \eqref{weno-switch} and \eqref{expand_mask}),
the loss of the TVD
property occurs for $\sigma > 1.4$ (that is, the same as the
SSPRK(5,3) scheme).
The results using equation-based partitioning are very similar.

\begin{figure}
  \centerline{%
    \includegraphics[width=0.45\textwidth]{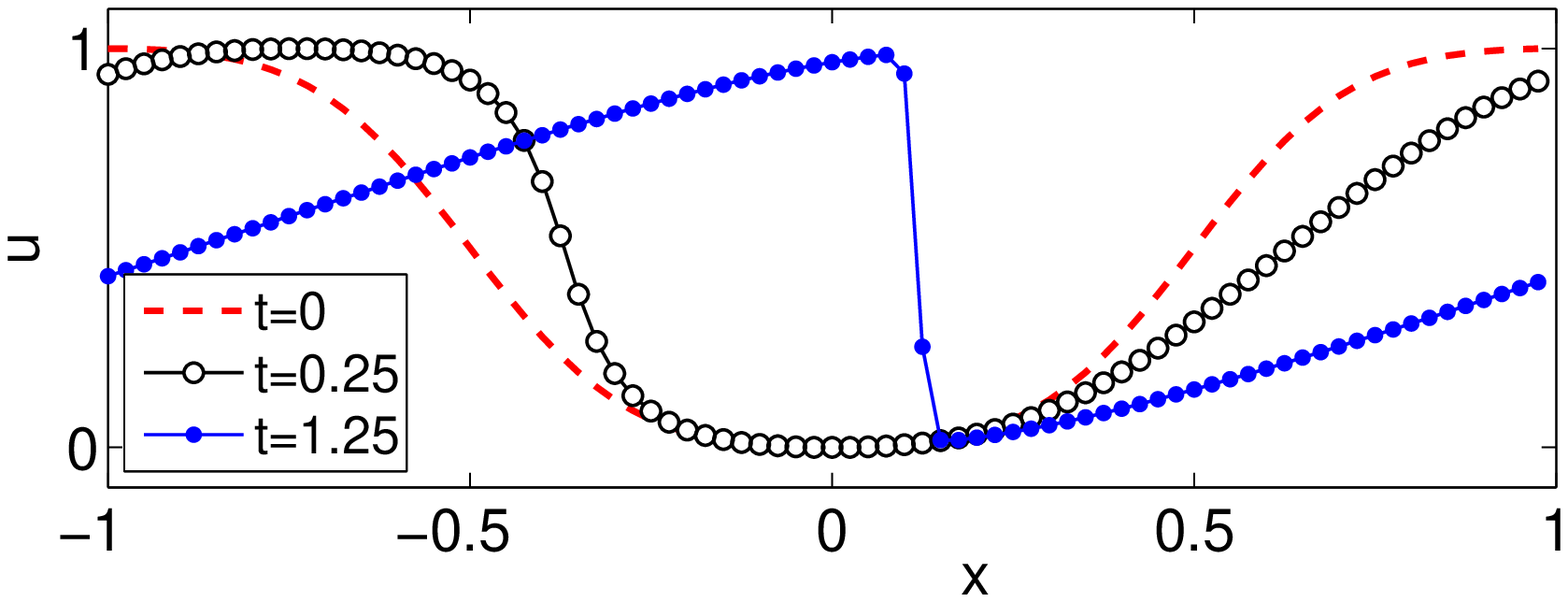}%
    \qquad
    \includegraphics[width=0.45\textwidth]{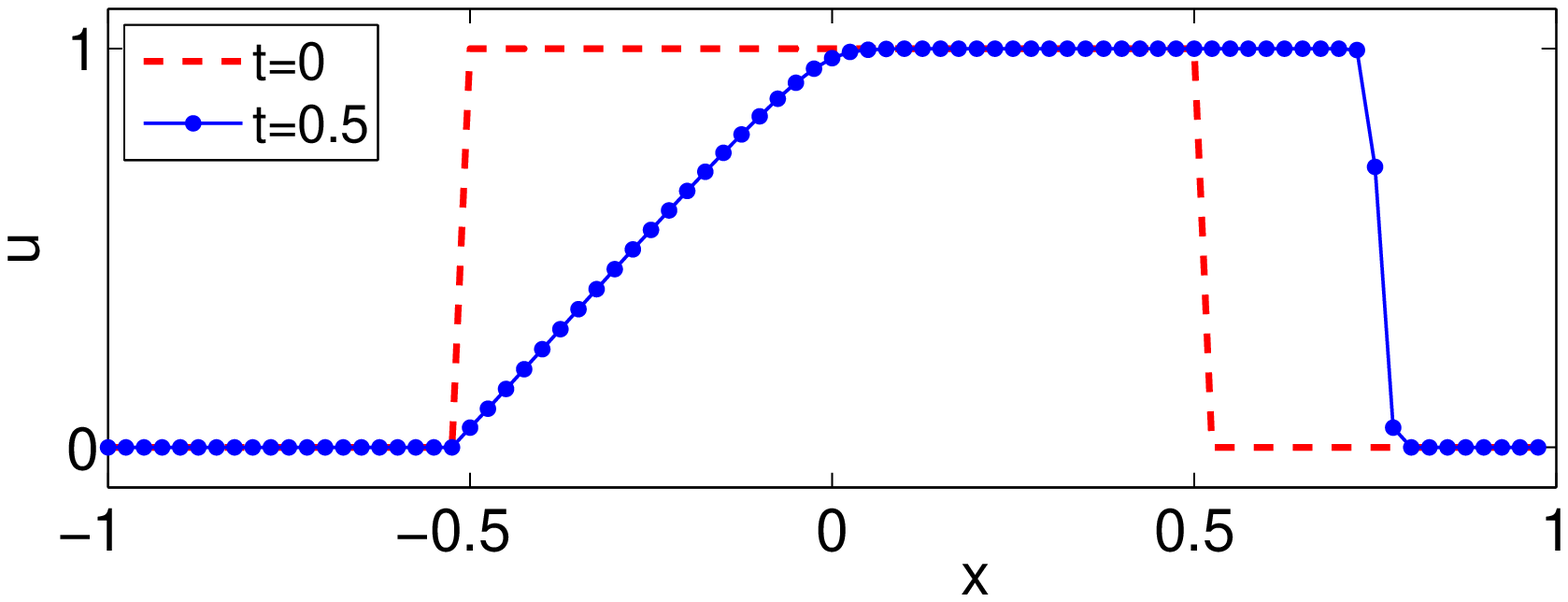}%
  }%
  \vspace*{-1.5ex} % HACK
  \caption{Solution profiles for Burgers' equation with the two
    initial conditions used in \S\ref{sec:burgers}: an initially
    smooth curve that evolves into a shock (left) and shock and
    expansion fan from a square wave initial condition (right).
    Solutions computed with the WENO-based SPERK method using $\dx =
    0.025$.}
  \label{fig:burg_profile}
\end{figure}

\begin{figure}
  \centerline{%
    \includegraphics[width=0.45\textwidth]{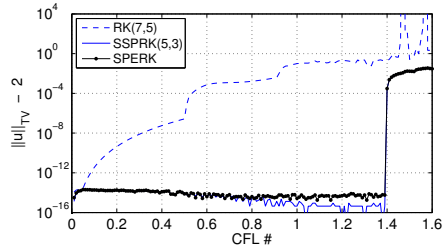}%
  }%
  \vspace*{-1.5ex} % HACK
  \caption{Total variation increase
      for Burgers' equation with square wave initial
      conditions (initial total variation $2$), $\dx=0.025$,
      $\tfinal = 0.5$, and WENO regularization parameter
      $\epsilon = 10^{-30}$.
      Note that for most CFL numbers, the SPERK curve overlays the SSPRK(5,3) results.}
  \label{fig:TVinc}
\end{figure}

Next consider the inviscid Burgers' equation with smooth initial
conditions~\eqref{eq:smoothICs}.
The WENO weights are again used to choose $\chi$.
Table~\ref{tab:weno_bestof} shows that the embedded pair
SPERK scheme offers high-order accuracy when the solution is
smooth and remains TVD later when the solution is non-smooth; the best of
both worlds!

\begin{table}
  \caption{The flux-partitioned SPERK scheme combined
    with WENO has the best features of each method.
    Here the initial smooth curve
    sharpens into a shock.
    At small times $t=0.25$ before shock formation, both of
    the underlying schemes exhibit their design orders of 5 and 3
    respectively.
    At a later time $t=1.25$, a shock has formed; all schemes
    exhibit larger errors as the solution is no longer smooth.
    However, RK(7,5) now exhibits spurious oscillations,
    indicated by an increase in the total variation seminorm.
    We use a CFL number of 1.2,
    $\Dx = \frac{1}{320}$, and error measured in an approx.~$L_2$ norm.
  }
  \label{tab:weno_bestof}
  \centering
  \begin{small}
  \begin{tabular}{cccccc}
    \hline
    method & time & error & error ($\Dx/2$) & est.~order & TV increase \\
    \hline
    % Here are max-norm errors:
    %RK(7,5) & $t=0.25$ & 1.29e-07 & 4.09e-09 & 4.98 & 0  \\
    %        & $t=1.25$ & O(1)         &  O(1)        &  ---    & 0.264  \\
    %\hline
    %SSPRK(5,3)  & $t=0.25$ & 5.59e-07  & 7.00e-08 & 3.00 & 0 \\
    %        & $t=1.25$ & O(1)          & O(1)         &  ---    & 0  \\
    %\hline
    %SPERK   & $t=0.25$ & 1.29e-07  &  4.09e-09 & 4.98 & 0 \\
    %        & $t=1.25$ & O(1)      &  O(1)      &  ---   & 0 \\
    %\hline
    RK(7,5)    & $t=0.25$ & 2.01e-8 &  6.38e-10 & 4.98 & 0 \\
            & $t=1.25$ & 8.72e-3  & 8.79e-3   & --- &  0.264  \\
    \hline
    SSPRK(5,3)   & $t=0.25$ & 1.08e-7 & 1.34e-8 & 3.01 & 0 \\
            & $t=1.25$ & 1.35e-3  & 9.53e-4  &   0.50    & 0  \\
    \hline
    SPERK & $t=0.25$ & 2.01e-8  & 6.38e-10    & 4.98 & 0 \\
            & $t=1.25$ & 1.35e-3 & 9.53e-4  &  0.50  & 0 \\
    \hline
  \end{tabular}
  \end{small}
\end{table}

\subsection{Euler equations: conservation and equation-based partitioning}
As we have seen, equation-based partitioning is non-conservative and can lead
to incorrect shock speeds.  It is suggested in \cite{harabetian1993nonconservative} that 
correct shock speeds should be obtained as long as the switching between schemes
occurs in regions where the solution is smooth.

We consider the Euler equations of compressible fluid dynamics; specifically, we solve
the Shu--Osher problem of \cite{shu1989efficient}, in which a shock wave impacts
a sinusoidally-varying density field.  We use the
fifth-order WENO wave-propagation method of \cite{Ketcheson2011} and a standard Roe Riemann solver
with entropy fix.
First, integrating with the 5th-order RK method, the solver fails for
any CFL number greater than $0.89$, due to the appearance of negative densities near
the shock in the sixth Runge--Kutta stage of the first time step.
Next, using the third-order SSP(5,3) method,
the integration is successful using a CFL number of $1.2$.
The accuracy obtained with either method is similar, with the largest errors occurring 
at the shock.

Finally, the problem is solved using the embedded pair, with switching based on
\eqref{2diff-switch} and applying \eqref{expand_mask} to ensure that switching does not occur
too near the shock.  A sample solution and a plot of $\chi$ are shown in Fig.~\ref{fig:SOshock}.
The SPERK method propagates the shock correctly and captures
the high-frequency region behind the shock as accurately as either of the component schemes.

\begin{figure}
  \centerline{%
    \includegraphics[width=0.48\textwidth]{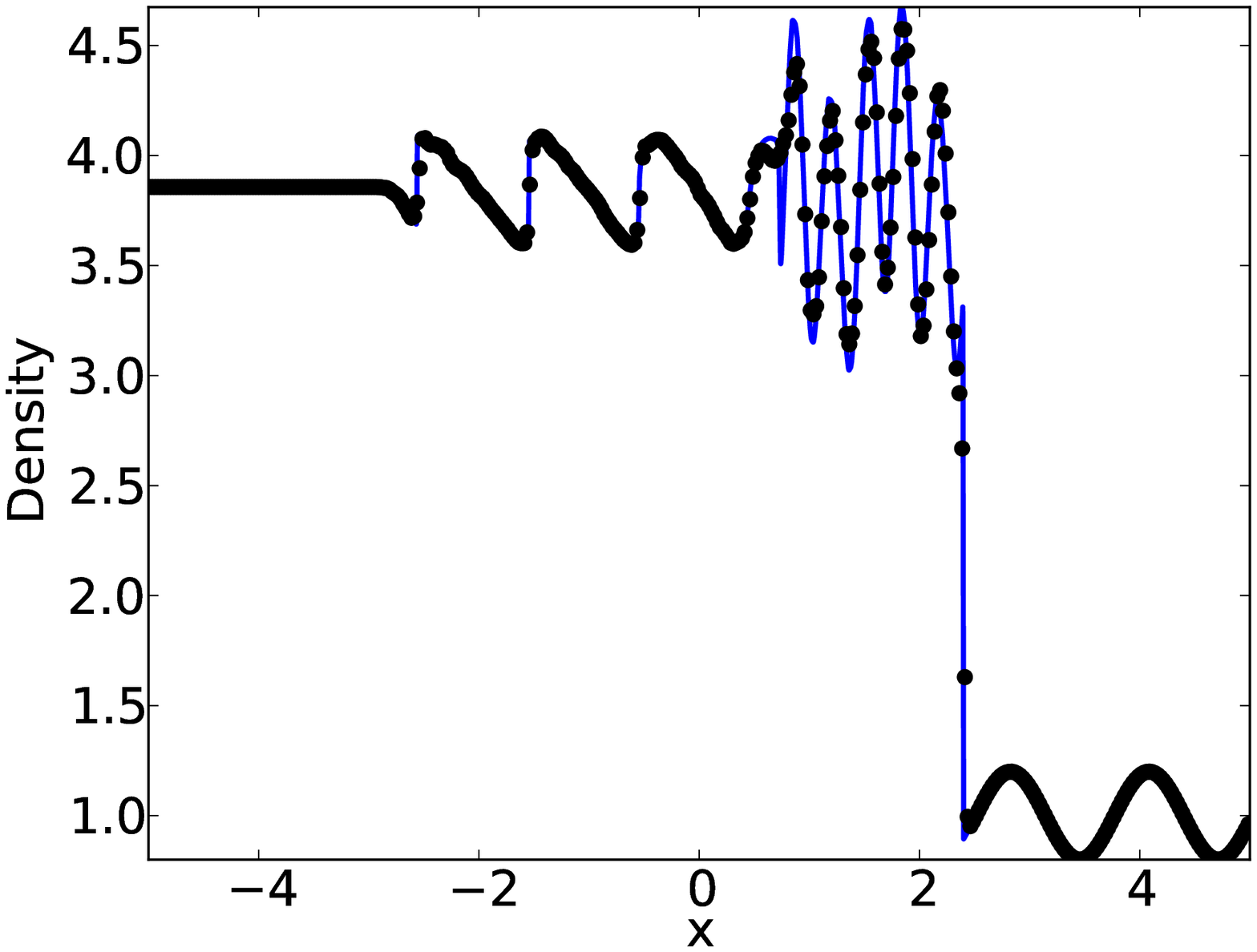}
    \includegraphics[width=0.48\textwidth]{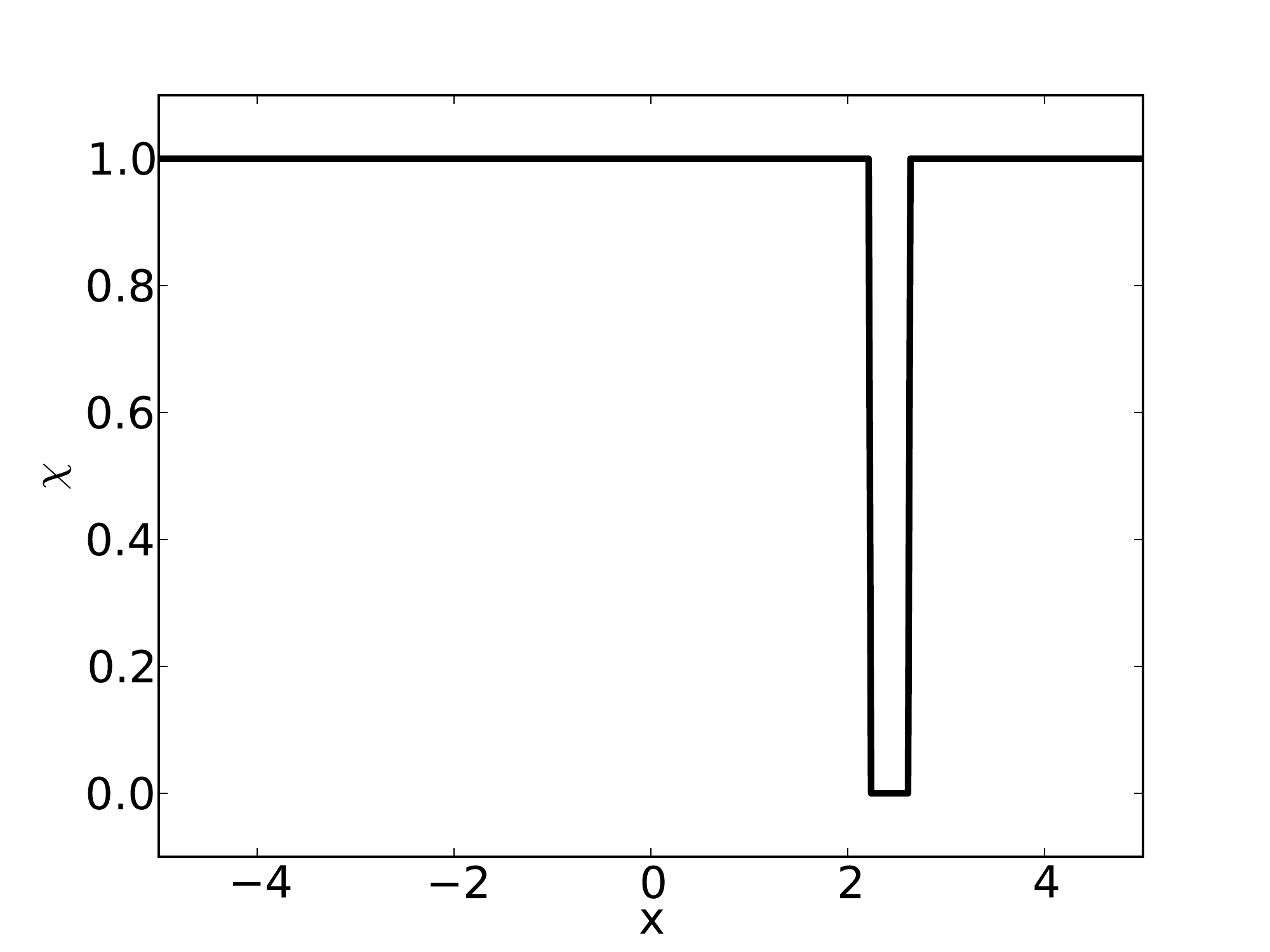}
  }
  \caption{Solution of the Shu--Osher problem at time $t=1.8$.  Left: the solid line represents a reference solution
  computed with just the SSP scheme and 6400 points; the black circles are computed using the 
  embedded pair and 400 points with CFL number 1.2.  Right: The mask $\chi$, showing that the SSP scheme is used only
  in a small region around the shock.
  }
  \label{fig:SOshock}
\end{figure}

\section{Discussion and future work\label{sec:discussion}}
This paper considers spatially partitioned embedded Runge--Kutta (SPERK) schemes.  Such methods give an efficient means to 
apply two or more Runge--Kutta methods to a spatially discretized PDE.  In this paper, SPERK schemes
are applied in two ways.  The first of these partitions the ODEs, and
is therefore referred to as equation-based partitioning.  In these methods 
the order of accuracy is shown to equal the minimum of the order of accuracy of the schemes composing the embedded pair.
Equation-based partitioning is not conservative when applied to a conservation law, however.  The second partitioning method
partitions by fluxes.  This flux-based partitioning has the advantage of being conservative when applied to a conservation law.
Theoretically, it may lead to a loss of one order of accuracy when compared to the underlying schemes; however, 
this loss of accuracy is not observed in practice. 
We show that both equation- and flux-based partitioning are positivity-preserving under a suitable time step restriction
when the underlying schemes are strong-stability-preserving (SSP).  
Numerical experiments on a spatially discretized 
variable coefficient advection-diffusion
equation show that  SPERK schemes can have superior linear stability properties than either scheme
individually.   
SPERK schemes may be applied to weighted non-oscillatory (WENO) spatial discretizations of conservation laws.
One approach to partitioning is carried out using the weights introduced in the WENO spatial discretization.
Specifically, a fifth-order Runge--Kutta method is used in smooth regions where WENO chooses a fifth-order stencil, 
and a third-order SSP Runge--Kutta method is used in non-smooth regions where WENO
chooses a formally third-order stencil.  We find that this combination avoids oscillations in
problems with shocks, and gives fifth-order accuracy in smooth flows.   

As part of this paper, several explicit SPERK schemes are designed and numerically validated.
Future work will carry out a more systematic development of  SPERK schemes.  Of particular interest for us
are methods that have good monotonicity where the solution is nonsmooth, and good accuracy elsewhere.
Such methods are particularly attractive for approximating conservation laws with isolated shocks and other nonsmooth
features.  

Schemes with improved local absolute stability properties (like those designed in
Section~\ref{sec:advectiondiffusion}) seem promising for problems with strong
spatial variation in the coefficients or grid size.  The stability polynomial optimization
algorithm developed in \cite{Ketcheson_Ahmadia_2012} could be used more generally to design
appropriate methods for such problems.

Theoretically, the accuracy of flux-based partitioning is expected to be less than the accuracy of 
the schemes composing the embedded pair but this loss of accuracy is not seen in our tests.  
This effect may be similar to what is observed in multirate
schemes.  For example, the Osher--Sanders scheme \cite{OsherSanders} is locally inconsistent, but has been found to
give first-order convergence in practice.
An explanation for this is given in \cite{hundsdorfer2012monotonicity} where it is shown that
local inconsistencies may not show up in the global errors due to cancellation and damping effects.
We hope to transfer this analysis to the case of SPERK schemes to obtain a better understanding
of the unexpectedly good performance of flux-based partitioning.

We have not attempted to prescribe general techniques for selecting the function $\chi$,
since those must depend on the nature of the problem and the purpose of switching between
schemes.  In the context of switching based on smoothness of the solution,
simple but useful approaches have been proposed in \cite{harten1972self,harabetian1993nonconservative}.
Investigation of effective switching methods for specific classes of problems is
left to future work.

\bibliographystyle{siam}
\bibliography{cbm,mixtime}

\end{document}